\title{Derivations and gt-henselian field topologies}
\thanks{This research was funded in part by the Austrian Science Fund (FWF) 10.55776/PAT1673125.}
\author{Erik Walsberg}
\subjclass{12J99, 12L12}
\email{erik.walsberg@gmail.com}
\DeclareFontFamily{U}{fsy}{}
\DeclareFontShape{U}{fsy}{m}{n}{<->s*[.9]psyr}{}
\DeclareSymbolFont{der@m}{U}{fsy}{m}{n}
\DeclareMathSymbol{\der}{\mathord}{der@m}{182}
\DeclareFontFamily{U}{BOONDOX-calo}{\skewchar\font=45 }
\DeclareFontShape{U}{BOONDOX-calo}{m}{n}{
  <-> s*[1.05] BOONDOX-r-calo}{}
\DeclareFontShape{U}{BOONDOX-calo}{b}{n}{
  <-> s*[1.05] BOONDOX-b-calo}{}
\DeclareMathAlphabet{\mathcalboondox}{U}{BOONDOX-calo}{m}{n}
\SetMathAlphabet{\mathcalboondox}{bold}{U}{BOONDOX-calo}{b}{n}
\DeclareMathAlphabet{\mathbcalboondox}{U}{BOONDOX-calo}{b}{n}
\DeclareSymbolFont{imag@m}{OT1}{cmr}{m}{ui}
\DeclareMathSymbol{\imag}{\mathord}{imag@m}{105}
\DeclareMathOperator*{\forkindep}{\raise0.2ex\hbox{\ooalign{\hidewidth$\vert$\hidewidth\cr\raise-0.9ex\hbox{$\smile$}}}}
\newcommand{\Sa}[1]{\ensuremath{\mathscr{#1}}}
\newcommand{\lowenheim}{L\"owenheim-Skolem\xspace}
\newcommand{\mfrak}{\mathfrak{m}}
\newcommand{\Spec}{\operatorname{Spec}}
\newcommand{\Frac}{\operatorname{Frac}}
\newcommand{\pac}{\mathrm{PAC}}
\newtheorem*{claim-star}{Claim}
\newtheorem{theorem}{Theorem}[section] 
\newtheorem{lemma}[theorem]{Lemma}
\newtheorem{prop-def}[theorem]{Proposition-Definition}
\newtheorem{corollary}[theorem]{Corollary}
\newtheorem{fact}[theorem]{Fact}
\newtheorem{fact-eh}[theorem]{Fact(?)}
\newtheorem{conjecture}[theorem]{Conjecture}
\newtheorem{question}[theorem]{Question}
\newtheorem{proposition}[theorem]{Proposition}
\newtheorem{proposition-eh}[theorem]{Proposition(?)}
\newtheorem*{theorem-star}{Theorem}
\newtheorem*{conjecture-star}{Conjecture}
\newtheorem*{lemma-star}{Lemma}
\newtheorem*{lemma6.2alt}{Lemma \ref*{confusing-v2}$'$}
\theoremstyle{definition}
\theoremstyle{remark}
\newcommand{\A}{\mathbb{A}}
\newcommand{\Q}{\mathbb{Q}}
\newcommand{\R}{\mathbb{R}}
\newcommand{\N}{\mathbb{N}}
\begin{document}

\maketitle

\begin{abstract}
Suppose that $K$ is a characteristic zero field with infinite transcendence degree over its prime subfield.
We show that if there is a gt-henselian topology on $K$  then there are $2^{2^{|K|}}$ pairwise incomparable gt-henselian topologies on $K$.
It follows by applying a recent theorem of Will Johnson that if $K$ is large and countable then there are $2^{2^{\aleph_0}}$ pairwise incomparable gt-henselian topologies on $K$.
We also formulate several conjectures concerning gt-henselian field topologies and their relationship with the \'etale-open topology.
\end{abstract}

\section{Introduction}
Let $K$ be a field.
All field topologies are assumed to be Hausdorff.
Recall that a \textbf{gt-henselian topology} on $K$ is a non-discrete field topology such that for every neighborhood $U$ of $-1$ and $d \in \N$ there is a neighborhood $V$ of $0$ such that $x^{d + 2} + x^{d+1} + a_d x^d + \cdots + a_1 x + a_0$ has a simple root in $U$ when $a_0,\ldots,a_d \in V$.
A field topology is gt-henselian if and only if it is not discrete and satisfies polynomial inverse and implicit function theorems~\cite[Prop.~6.2]{tbnidatf}.
If $K$ is the fraction field of a henselian local domain $R$ then the $R$-adic topology (recalled below) is a gt-henselian field topology.
Recall that $K$ is \textbf{large} if any smooth $1$-dimensional $K$-variety with a $K$-point has infinitely many $K$-points.
See \cite{Pop-little, open-problems-ample, firstpaper} for background on and examples of this important class of fields.
Any field admitting a gt-henselian topology is large~\cite[Cor.~8.15]{field-top-2}.
Johnson proved a partial converse~\cite{lagth}.

\begin{fact}\label{fact:will}
Suppose that $K$ is large and countable.
Then $K$ admits a gt-henselian field topology.
Furthermore the intersection of all topologies on $V(K)$ induced by gt-henselian field topologies agrees with the \'etale-open topology on $V(K)$ for any $K$-variety $V$.
\end{fact}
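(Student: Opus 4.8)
The plan is to prove the two assertions separately, reading the ``furthermore'' as the equality $\cE_V = \bigcap_\tau \tau_V$, where $\tau$ ranges over the gt-henselian field topologies on $K$, $\tau_V$ is the induced topology on $V(K)$, and $\cE_V$ is the \'etale-open topology on $V(K)$. One inclusion is soft. If $\tau$ is gt-henselian then, by the characterization recalled in the introduction, $\tau$ satisfies the polynomial implicit function theorem; hence every \'etale morphism $f \colon W \to V$ is, on $K$-points, a local $\tau$-homeomorphism, so each basic \'etale-open set $f(W(K))$ is a union of $\tau_V$-open sets and is therefore $\tau_V$-open. Thus $\cE_V \subseteq \tau_V$ for every gt-henselian $\tau$, whence $\cE_V \subseteq \bigcap_\tau \tau_V$. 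Since $\cE_V$ and each $\tau_V$ are functorial systems that are local on $V$ and restrict to the subspace topology along closed immersions, I would reduce the remaining inclusion to the case $V = \A^n$, and in fact to a local statement at a single point $a \in \A^n(K)$.

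For the existence of at least one gt-henselian $\tau$ I would aim directly at that same characterization: build a non-discrete Hausdorff field topology satisfying the polynomial inverse and implicit function theorems. The natural device is a fundamental system of neighborhoods $(V_i)_{i \in \N}$ of $0$ constructed by a countable recursion. Countability of $K$ lets me enumerate all instances of the gt-henselian root-finding axiom --- all $d \in \N$, all prospective neighborhoods $U$ of $-1$, and all coefficient tuples --- and at stage $i$ shrink $V_i$ so that the required simple root exists; largeness is what guarantees that the relevant smooth curve, once it acquires a $K$-point near $-1$, has enough further $K$-points to place the root inside the prescribed neighborhood as the coefficients are perturbed. Simultaneously I would maintain the conditions making $(V_i)$ the neighborhood base of a field topology (stability under the ring operations and inversion), together with $V_i \neq \{0\}$ for non-discreteness and $\bigcap_i V_i = \{0\}$ for Hausdorffness.

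The hard inclusion $\bigcap_\tau \tau_V \subseteq \cE_V$ is where the real content lies. After the reduction it suffices to show: if $a \in A \subseteq \A^n(K)$ and $A$ is \emph{not} an \'etale-open neighborhood of $a$, then some gt-henselian $\tau$ witnesses $a \notin \operatorname{int}_{\tau}(A)$, so that $A \notin \tau_V$. Concretely I would, for each such failure, manufacture a gt-henselian topology that is as coarse as possible near $a$ --- one whose small neighborhoods of $a$ are pinned to \'etale images of the prescribed data --- so that every $\tau$-neighborhood of $a$ escapes $A$; intersecting these topologies over all failures forces the reverse inclusion. I expect the crux of the whole theorem, shared by both parts, to be precisely this engineering: building \emph{on demand}, and with prescribed local coarseness at a chosen point, a genuinely Hausdorff gt-henselian topology. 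This is where I anticipate the countability hypothesis to be used in an essential back-and-forth/amalgamation fashion, and it is the step I would expect to be the main obstacle; the soft inclusion and the reductions to $\A^n$ are by contrast routine.
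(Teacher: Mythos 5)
You should first be aware that the paper does not prove this statement at all: it is stated as a \emph{Fact} and imported wholesale from Johnson's paper \cite{lagth} (``Johnson proved a partial converse''). It is a recent, genuinely difficult theorem, and the present paper treats it as a black box, just as it treats the ``soft'' half of your argument (that every gt-henselian topology refines the \'etale-open topology) as Fact~\ref{fact:basic gt}, imported from \cite[Lemma~8.14]{field-top-2}. So the benchmark here is Johnson's proof, and your proposal does not come close to reconstructing it: both load-bearing steps --- the existence of one gt-henselian topology, and the on-demand construction of a gt-henselian topology that is coarse at a prescribed point --- are announced (``I would aim'', ``I would manufacture'') rather than carried out, and you yourself flag them as the anticipated obstacle. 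An outline that defers exactly the hard content is not a proof.

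Beyond incompleteness, the existence sketch contains a concrete error. You propose to ``at stage $i$ shrink $V_i$ so that the required simple root exists.'' But the gt-henselian condition is monotone in the coefficient neighborhood: shrinking $V$ only removes coefficient tuples to be handled, it can never \emph{create} a simple root of $x^{d+2}+x^{d+1}+a_dx^d+\cdots+a_0$ in the prescribed neighborhood $U$ of $-1$. The actual task is to show that some infinite set $V$ (compatible with all the other filter axioms, with closure under the ring operations, and with $\bigcap_i V_i = \{0\}$ for Hausdorffness) forces every such polynomial to have a root in $K$ at all --- a strong arithmetic condition on $K$, and precisely what fails for, say, $\Q$. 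Your appeal to largeness does not address this: largeness says a smooth curve with one $K$-point has infinitely many, but it gives no control over \emph{where} those points lie relative to a topology that does not yet exist (the phrase ``near $-1$'' is circular at this stage), and the incidence variety of coefficient tuples and roots is not a curve, so largeness cannot be applied fiberwise to produce a root for each coefficient tuple. The same difficulty, compounded by the requirement of prescribed local coarseness, sits inside your ``hard inclusion'' step. In short, the proposal correctly identifies the shape of the statement ($\Sa E_K$-topology $=$ intersection of gt-henselian topologies, plus existence) but supplies no mechanism for the part that makes it a theorem.
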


We recall the \'etale-open topology below.
We first state our main results.

\begin{theorem}
Suppose that $K$ is characteristic zero and has infinite transcendence degree.
\begin{enumerate}[leftmargin=*]
\item If there is a gt-henselian topology on $K$ then there are $2^{2^{|K|}}$ pairwise incomparable gt-henselian topologies on $K$ and $2^{|K|}$ pairwise incomparable gt-henselian topologies of weight $\le |K|$ on $K$.
\item If $\uptau$ is a gt-henselian topology  of weight $\le |K|$ on $K$ then there are $2^{2^{|K|}}$ pairwise incomparable gt-henselian topologies on $K$ refining $\uptau$.
\item If $\uptau$ is a locally bounded gt-henselian topology on $K$ then there are $2^{|K|}$ pairwise incomparable locally bounded gt-henselian topologies on $K$ refining $\uptau$.
\item If $K$ is large and countable then there are $2^{2^{\aleph_0}}$ pairwise incomparable gt-henselian topologies on $K$ and $2^{\aleph_0}$ pairwise incomparable second countable gt-henselian topologies on $K$.
\end{enumerate}
\end{theorem}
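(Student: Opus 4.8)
The plan is to deduce part (4) immediately from Johnson's theorem (Fact~\ref{fact:will}) together with part (1). First I would check that all the hypotheses align: $K$ is characteristic zero with infinite transcendence degree (the standing hypothesis of the theorem) and is additionally large and countable, so Fact~\ref{fact:will} applies and tells us that $K$ admits at least one gt-henselian field topology. This existence is exactly the hypothesis needed to invoke part (1).

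Next I would apply part (1) verbatim. Since $K$ has characteristic zero it contains $\Q$ and is therefore infinite, so countability gives $|K| = \aleph_0$. Part (1) then produces $2^{2^{|K|}} = 2^{2^{\aleph_0}}$ pairwise incomparable gt-henselian topologies on $K$, which is the first assertion of (4), and likewise $2^{|K|} = 2^{\aleph_0}$ pairwise incomparable gt-henselian topologies of weight $\le |K| = \aleph_0$ on $K$.

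Finally I would observe that a field topology of weight $\le \aleph_0$ is precisely a second countable topology: weight is by definition the least cardinality of a basis, so weight $\le \aleph_0$ is the assertion that a countable basis exists. Hence the $2^{\aleph_0}$ topologies of weight $\le \aleph_0$ furnished by part (1) are exactly $2^{\aleph_0}$ pairwise incomparable second countable gt-henselian topologies, yielding the second assertion. All the genuine content sits in Fact~\ref{fact:will} and in part (1), so I expect no real obstacle here; part (4) is a formal consequence, and the only things to verify are the elementary cardinal identities $|K| = \aleph_0$ and the identification of weight $\le \aleph_0$ with second countability.
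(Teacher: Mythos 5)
There is a genuine gap: your proposal proves only part (4) of the theorem, and it does so by assuming part (1), which you never establish. The statement you were asked to prove comprises all four parts, and parts (1)--(3) are where essentially all of the mathematical content lies. Your reduction of (4) to (1) plus Fact~\ref{fact:will} is itself correct and is exactly how the paper handles it (this is Corollary~\ref{cor:to will}, deduced from Proposition~\ref{prop:mm tops} and Johnson's theorem): characteristic zero forces $K$ infinite, so countability gives $|K| = \aleph_0$, and weight $\le \aleph_0$ is the same as second countability. But treating part (1) as a black box means the proof is circular as a proof of the full theorem.

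What is missing is the entire construction that the paper devotes Section~4 to. Given a gt-henselian topology $\uptau$, the paper first reduces to weight $\le |K|$ (Lemma~\ref{lem:gt wt}, via a L\"owenheim--Skolem argument applied to a neighborhood basis). It then uses infinite transcendence degree and characteristic zero to build a $K$-linearly independent family $D$ of $2^{|K|}$ derivations $K \to K$ whose common constant subfield is $\uptau$-dense (Lemmas~\ref{lem:dense} and~\ref{lem:get a lot}, using Fact~\ref{fact:der}). For each $I \subseteq D$ one gets a refined topology $\uptau_I$ by pulling back the product topology along $a \mapsto (a, (\der(a))_{\der \in I})$; the key technical steps are that $\uptau_I$ is again a field topology (Lemma~\ref{lem:der 0}), that it is gt-henselian when non-discrete (Proposition~\ref{prop:key}, which uses the formula $\der(\beta) = -(\der g)(\beta)/g'(\beta)$ at a simple root $\beta$), and that $\uptau_{I_1}$ refines $\uptau_{I_2}$ if and only if $I_2 \subseteq I_1$ (Lemma~\ref{lem:ind-family}, which needs the density of the constant field). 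Combining this with a Sperner family of $2^{2^{|K|}}$ pairwise incomparable subsets of $D$ (Fact~\ref{fact:sperner}) yields parts (1) and (2), and a separate argument with locally bounded topologies (Lemma~\ref{lem:loc bounded} and Corollary~\ref{cor:loc bound}) yields part (3). None of this machinery, nor any substitute for it, appears in your proposal, so parts (1)--(3) remain unproven.
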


This seems to be bad news for the theory of gt-henselian field topologies.
There are too many, their structure is too complex, and they are too non-canonical.
However, we are really interested in the relationship between gt-henselian topologies and the \'etale-open topology. 
We formulate some conjectures about this in Section~\ref{section:qc}.

\section{Background}
Throughout $K$ is a field and the transcendence degree of $K$ is the transcendence degree over the prime subfield.
Let $V$ range over $K$-varieties and let $V(K)$ be the set of $K$-points of $V$.
We let $\A^n = \Spec K[x_1, \ldots, x_n]$ and recall that $\A^n(K)$ is canonically identified with $K^n$ for each $n \ge 1$.
An \textbf{E-subset} of $V(K)$ is a set of the form $f(W(K))$ for $f\colon W \to V$ an \'etale morphism of $K$-varieties.
The \textbf{\'etale-open} (or $\Sa E_K$-) topology on $V(K)$ is the topology with basis the collection of E-sets. 
The $\Sa E_K$-topology on $K = \A^1(K)$ is discrete if and only if the $\Sa E_K$-topology on $V(K)$ is discrete for every $V$ if and only if $K$ is not large~\cite{firstpaper}.
We say that the $\Sa E_K$-topology \textbf{is a field topology} if there is a field topology $\uptau$ on $K$ such that the $\uptau$-topology on $V(K)$ agrees with the $\Sa E_K$-topology for every $V$ and we say that a field topology $\uptau$ on $K$ refines the $\Sa E_K$-topology when the $\uptau$-topology on $V(K)$ refines the $\Sa E_K$-topology for every $V$.
By \cite[Prop.~4.9]{firstpaper} the $\Sa E_K$-topology is a field topology if and only if the $\Sa E_K$-topology on $K^n$ agrees with the product topology given by the $\Sa E_K$-topology on $K$ for every $n \ge 2$.

\medskip
The \textbf{weight} $w(\uptau)$ of a topology $\uptau$ is the minimal cardinality of a basis.
There are $2^{|X|}$ topologies of weight $\le |X|$ on an infinite set $X$.
Let $\uptau$ be a field topology on $K$.
Then $w(\uptau)\le |K| + \eta$ for $\eta$ the minimal cardinality of a neighborhood basis at zero.
Recall that $X \subseteq K$ is \textbf{bounded} if for any neighborhood $U$ of $0$ we have $\lambda X \subseteq U$ for some $\lambda \in K^\times$.
If $X$ is bounded then for any neighborhood $U$ of $0$ we have $VX \subseteq U$ for some neighborhood $V$ of $0$~\cite[Lemma~2.1]{Prestel1978}.
Bounded sets are closed under affine transformations.
If $U$ is a nonempty bounded open set then $\{ aU + b : a \in K^\times, b \in K\}$ is a basis for $\uptau$; $\uptau$ is \textbf{locally bounded} if some nonempty open set is bounded.
Hence $w(\uptau) \le |K|$ when $\uptau$ is locally bounded.

\medskip
All rings are commutative with unit.
We let $\Frac(R)$ be the fraction field of a domain $R$.
If $R$ is a local domain with $K = \Frac(R)$ then the \textbf{$R$-adic topology} is the topology on $K$ with basis $\{aR + b : a \in K^\times, b \in K\}$, this is a locally bounded field topology~\cite[Thm.~2.2]{Prestel1978}.
Furthermore, $\uptau$ is a V-topology if $(K\setminus U)^{-1}$ is bounded for any neighborhood $U$ of $0$.
A \textbf{t-henselian topology} on $K$ is a gt-henselian V-topology~\cite[Prop.~8.3]{field-top-2}.
The topology induced by a non-trivial henselian valuation and the order topology over a real closed field are both t-henselian.
T-henselianity was introduced by Prestel and Ziegler who showed that if $K$ is not separably closed then $K$ admits a t-henselian topology if and only if $K$ admits a unique t-henselian topology if and only if $K$ is elementarily equivalent to a field which admits a non-trivial henselian valuation~\cite{Prestel1978}.

\section{Questions and conjectures}\label{section:qc}

\begin{question}
For which large fields $K$ is the $\Sa E_K$-topology a field topology?
\end{question}

We know that the $\Sa E_K$-topology is a field topology in the following cases:
\begin{enumerate}[leftmargin=*]
\item When $K$ is not separably closed and t-henselian. 
In this case the $\Sa E_K$-topology agrees with the unique t-henselian topology over $K$~\cite[Thm.~B]{firstpaper}.
Furthermore the $\Sa E_K$-topology is a V-topology if and only if $K$ is t-henselian and not separably closed.
\item When $K$ is the fraction field of a quasi-excellent henselian local ring $R$.
In this case the $\Sa E_K$-topology agrees with the $R$-adic topology~\cite[Cor.~1.4]{field-top-2}.
\end{enumerate}

The $\Sa E_K$-topology is not a field topology for $\pac$ $K$~\cite[Prop.~7.1]{field-top-2}.
Fact~\ref{fact:basic gt} is \cite[Lemma~8.14]{field-top-2}.

\begin{fact}\label{fact:basic gt}
Any gt-henselian topology on $K$ refines the $\Sa E_K$-topology.
If $K$ is large and the $\Sa E_K$-topology is a field topology then the $\Sa E_K$-topology is gt-henselian.
\end{fact}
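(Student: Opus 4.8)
The plan is to treat the two assertions separately; both rest on the compatibility between étaleness and the polynomial inverse/implicit function theorems available in a gt-henselian topology, recorded in the introduction through \cite[Prop.~6.2]{tbnidatf}. For the first assertion, let $\uptau$ be gt-henselian. Since the E-sets form a basis for the $\Sa E_K$-topology, it suffices to show that every E-set $f(W(K))$, for $f\colon W \to V$ étale, is $\uptau$-open in $V(K)$, and for this I would show $f$ is a local homeomorphism for $\uptau$. Fix $w \in W(K)$ and set $v = f(w)$. By the local structure of étale morphisms, $f$ is Zariski-locally of standard étale form near $w$: it adjoins a simple root $x$ of a polynomial $p$ whose coefficients are regular functions on $V$, with $p'(x) \neq 0$ at $w$. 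The polynomial implicit function theorem for $\uptau$ then yields a $\uptau$-continuous local section and shows that $f$ carries a $\uptau$-neighborhood of $w$ onto a $\uptau$-neighborhood of $v$. Hence $f(W(K))$ contains a $\uptau$-neighborhood of each of its points, so it is $\uptau$-open, and $\uptau$ refines $\Sa E_K$.

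For the second assertion, assume $K$ is large and that $\uptau$ is a field topology inducing $\Sa E_K$ on every $V(K)$. First, $\uptau$ is non-discrete, as largeness forces the $\Sa E_K$-topology on $K$ to be non-discrete. To verify the root condition, fix a $\uptau$-neighborhood $U$ of $-1$ and $d \in \N$, and observe that $p(x) = x^{d+2} + x^{d+1}$ has $-1$ as a simple root, since $p'(-1) = (-1)^{d+1} \neq 0$. I would consider the hypersurface
$$W = \{(x, a_0, \ldots, a_d) : x^{d+2} + x^{d+1} + a_d x^d + \cdots + a_0 = 0\} \subseteq \A^{d+2},$$
its open subvariety $W^\circ$ where $\partial/\partial x \neq 0$ (the simple-root locus, which contains $(-1,0,\ldots,0)$), and the projection $\pi\colon W^\circ \to \A^{d+1}$ to the $a$-coordinates, which is étale. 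Because a composite of étale morphisms is étale, $\pi$ sends E-subsets of $W^\circ(K)$ to E-subsets of $K^{d+1}$, so $\pi$ is an open map for $\Sa E_K = \uptau$. Now $W^\circ(K) \cap \{x \in U\}$ is a $\uptau$-neighborhood of $(-1,0,\ldots,0)$, the $x$-coordinate being $\uptau$-continuous as it is induced by a morphism, so its $\pi$-image is a $\uptau$-open neighborhood of $0$ in $K^{d+1}$. Since $\uptau$ is a field topology, the topology on $K^{d+1}$ is the product topology \cite[Prop.~4.9]{firstpaper}, so this image contains $V^{d+1}$ for some $\uptau$-neighborhood $V$ of $0$; then for all $a_0, \ldots, a_d \in V$ there is $(x,a) \in W^\circ(K)$ with $x \in U$, that is, a simple root of the given polynomial lying in $U$, which is exactly gt-henselianity.

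The main obstacle in both directions is the same translation between the algebro-geometric notion of étaleness and the analytic content of the function theorems. In the first part the delicate point is that an étale morphism is only locally of standard étale form, so I must pass to a Zariski-local factorization and check that the resulting local-homeomorphism statements patch together to give openness of the full image $f(W(K))$; I expect this bookkeeping, rather than any single estimate, to be the crux. In the second part the analogous difficulty is confined to forcing the produced root to lie in the prescribed neighborhood $U$, which is precisely why I route the argument through the $\uptau$-continuity of the $x$-coordinate and the open-mapping property of $\pi$ rather than merely citing that $\pi(W^\circ(K))$ is open.
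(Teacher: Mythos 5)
The paper itself contains no proof of this statement to compare against: Fact~\ref{fact:basic gt} is imported verbatim from \cite[Lemma~8.14]{field-top-2}, with the sentence ``Fact~\ref{fact:basic gt} is \cite[Lemma~8.14]{field-top-2}'' serving as the entire justification. Judged on its own terms, your argument is correct, and it is essentially the argument the cited source is built on, so nothing here is a novel detour. For the first assertion: E-sets form a basis, the local structure theorem puts an \'etale $f\colon W \to V$ in standard \'etale form near any $w \in W(K)$, and the polynomial implicit function theorem (available for gt-henselian $\uptau$ by \cite[Prop.~6.2]{tbnidatf}, as the introduction records) shows $f(W(K))$ contains a $\uptau$-neighborhood of $f(w)$; the patching you flag as the crux is in fact harmless, since Zariski-open sets are $\uptau$-open for any field topology, so the standard \'etale charts are themselves $\uptau$-open and openness of the image needs only the existence half of the implicit function theorem, not the full local-homeomorphism statement. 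For the second assertion, your two pivots are exactly what is needed and both check out: $\pi\colon W^\circ \to \A^{d+1}$ is \'etale because $W$ is finite flat over $\A^{d+1}$ (the defining polynomial is monic in $x$) and $W^\circ$ is precisely its unramified locus; and the hypothesis that the $\Sa E_K$-topology is a field topology is used twice, once to identify the \'etale-open topology on $W^\circ(K)$ with the subspace topology from the product topology on $K^{d+2}$ (so that $\{x \in U\}$ cuts out an open set), and once, via \cite[Prop.~4.9]{firstpaper}, to shrink the open image $\pi\bigl(W^\circ(K) \cap \{x \in U\}\bigr)$ to a box $V^{d+1}$ around the origin. One point worth making explicit in a final write-up: the Fact is not restricted to characteristic zero, so record the integer identity $p'(-1) = (d+2)(-1)^{d+1} + (d+1)(-1)^d = (-1)^{d+1}$, which shows $-1$ is a simple root of $x^{d+2}+x^{d+1}$ in every characteristic, as your argument silently requires.
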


Hence there is a coarsest gt-henselian topology on $K$ when the $\Sa E_K$-topology is a field topology.
Johnson's theorem on gt-henselian topologies gives the following partial converse.

\begin{fact}\label{cor:will}
If $K$ is countable and large then the $\Sa E_K$-topology is a field topology if and only if there is a coarsest gt-henselian topology on $K$.
\end{fact}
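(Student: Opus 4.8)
The plan is to deduce this as a fairly formal consequence of Fact~\ref{fact:will} and Fact~\ref{fact:basic gt}, with the two directions drawing on quite different inputs.

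The forward implication is exactly the observation recorded just above the statement, and it does not use countability. If the $\Sa E_K$-topology is a field topology then, since $K$ is large, the second assertion of Fact~\ref{fact:basic gt} shows that the $\Sa E_K$-topology is gt-henselian, while the first assertion shows that every gt-henselian topology on $K$ refines it. Hence the $\Sa E_K$-topology is itself the coarsest gt-henselian topology on $K$, and in particular a coarsest gt-henselian topology exists.

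For the reverse implication I would assume that there is a coarsest gt-henselian topology $\uptau_0$ on $K$ and argue that $\uptau_0$ witnesses that the $\Sa E_K$-topology is a field topology. The idea is to fix an arbitrary $K$-variety $V$ and compare, on $V(K)$, the topology induced by $\uptau_0$ with the $\Sa E_K$-topology. By Fact~\ref{fact:will} --- this is where largeness and countability enter --- the $\Sa E_K$-topology on $V(K)$ is the intersection of all topologies on $V(K)$ induced by gt-henselian field topologies; as $K$ is large and countable this family is nonempty and contains the topology induced by $\uptau_0$. I would then observe that, because $\uptau_0$ is coarser than every gt-henselian topology on $K$, the topology it induces on $V(K)$ is coarser than the one induced by any other gt-henselian topology, so $\uptau_0$ realizes this intersection: the $\uptau_0$-topology on $V(K)$ agrees with the $\Sa E_K$-topology on $V(K)$. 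Since $V$ was arbitrary, $\uptau_0$ witnesses that the $\Sa E_K$-topology is a field topology.

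The only step requiring genuine care is the monotonicity claim that a coarsening of field topologies on $K$ induces a coarsening of the associated topologies on each $V(K)$. This holds because the topology induced on $V(K)$ is built from the product topology on affine space, followed by passage to a subspace and gluing over charts, and each of these operations preserves refinement; hence the assignment sending a field topology on $K$ to the topology it induces on $V(K)$ is monotone. Granting this, the minimum $\uptau_0$ of the family of gt-henselian topologies on $K$ is carried to the minimum of the induced family on $V(K)$, and the minimum of a family of topologies is exactly their intersection, which closes the argument above. I expect this monotonicity bookkeeping, rather than any hard topology, to be the main (and only mild) obstacle.
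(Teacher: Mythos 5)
Your proposal is correct and matches the paper's (implicit) derivation exactly: the forward direction is the remark following Fact~\ref{fact:basic gt} that the $\Sa E_K$-topology is then itself the coarsest gt-henselian topology, and the reverse direction extracts the coarsest topology as the minimum of the family whose intersection Fact~\ref{fact:will} identifies with the $\Sa E_K$-topology. The monotonicity point you flag (coarsening field topologies on $K$ coarsens the induced topologies on each $V(K)$) is the right thing to check and holds for the routine reasons you give.
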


We conjecture that this holds in general.

\begin{conjecture}
If $K$ is large then the $\Sa E_K$-topology is the intersection of all gt-henselian topologies on $K$ and hence the $\Sa E_K$-topology is a field topology if and only if there is a coarsest gt-henselian topology on $K$.
\end{conjecture}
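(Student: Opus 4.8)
The two assertions collapse to a single one: that on every $V(K)$ the $\Sa E_K$-topology equals the intersection $\bigcap_\uptau \uptau$ of the topologies induced by all gt-henselian field topologies $\uptau$ on $K$. Granting this identity the stated biconditional is formal. If $\Sa E_K$ is a field topology then it is gt-henselian by Fact~\ref{fact:basic gt} (here $K$ is large), so it occurs among the $\uptau$; being equal to their meet it is then the coarsest gt-henselian topology. Conversely, if a coarsest gt-henselian topology $\uptau_0$ exists then every $\uptau$ refines $\uptau_0$ and $\uptau_0$ is itself one of the $\uptau$, so $\bigcap_\uptau \uptau = \uptau_0$; the identity gives $\uptau_0 = \Sa E_K$, whence $\Sa E_K$ is a field topology. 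Thus everything reduces to proving $\Sa E_K = \bigcap_\uptau \uptau$.

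One inclusion is free. By Fact~\ref{fact:basic gt} every gt-henselian $\uptau$ refines $\Sa E_K$, so $\Sa E_K \subseteq \uptau$ for each such $\uptau$ and hence $\Sa E_K \subseteq \bigcap_\uptau \uptau$. It remains to prove the opposite inclusion, that a set open in every gt-henselian topology is $\Sa E_K$-open. Passing to neighbourhood filters, this is equivalent to the following witnessing statement: for every $x \in V(K)$ and every $N \subseteq V(K)$ that is not an $\Sa E_K$-neighbourhood of $x$ --- i.e.\ every E-set containing $x$ meets the complement of $N$ --- there is a gt-henselian topology $\uptau$ on $K$ for which $N$ is not a $\uptau$-neighbourhood of $x$, equivalently $x$ lies in the $\uptau$-closure of $V(K)\setminus N$. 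Constructing such a witnessing $\uptau$ is the entire content.

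The natural plan is to bootstrap from Johnson's countable theorem, Fact~\ref{fact:will}, which is exactly this identity for countable large $K$. Given the uncountable $K$ together with a witnessing pair $(x,N)$ as above, I would descend to a countable large subfield $K_0 \subseteq K$ chosen to carry the relevant algebraic data, invoke Fact~\ref{fact:will} over $K_0$ to obtain a gt-henselian topology on $K_0$ in which the trace of $N$ fails to be a neighbourhood of $x$, and then ascend, extending this topology to a gt-henselian topology on $K$ that still fails to make $N$ a neighbourhood of $x$.

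The hard part is precisely this two-way passage between $K$ and $K_0$. On the \'etale-open side the descent needs a base-change statement: E-subsets of $V(K)$ arise from \'etale maps with parameters anywhere in $K$, so one must know that failure of the $\Sa E_K$-neighbourhood property at $x$ is reflected by failure of the $\Sa E_{K_0}$-neighbourhood property over a suitable $K_0$, uniformly enough to be usable. On the gt-henselian side the ascent is worse: gt-henselian topologies are in no sense definable or algebraic, so there is no canonical lift from $K_0$ to $K$, and the defining simple-root condition must be re-established for whatever extension one writes down. The refinement constructions behind the main theorem, together with valuation-theoretic extension, give partial control over manufacturing gt-henselian topologies with prescribed local behaviour, and I would try to splice them together; but there is no evident mechanism guaranteeing that the countable witness survives the return to $K$. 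A further difficulty lurking underneath is that it is not even known that an arbitrary uncountable large field carries a gt-henselian topology at all --- without which $\bigcap_\uptau \uptau$ degenerates --- so the conjecture implicitly subsumes this existence problem. These gaps are presumably why the statement is left as a conjecture.
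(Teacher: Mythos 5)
The statement you are addressing is left as a \emph{conjecture} in the paper --- no proof of it exists there, so there is nothing to compare your argument against. What you have written is, accordingly, not a proof but a correct structural analysis of the conjecture, and you are candid about this. The parts you do prove are right: granting the identity $\Sa E_K = \bigcap_\uptau \uptau$, the biconditional is formal exactly as you argue (using Fact~\ref{fact:basic gt} and non-discreteness of the $\Sa E_K$-topology on large $K$ for one direction, and monotonicity of induced topologies on $V(K)$ for the other); and the inclusion $\Sa E_K \subseteq \bigcap_\uptau \uptau$ is immediate from Fact~\ref{fact:basic gt}. You also correctly identify Fact~\ref{fact:will} as precisely the countable case of the identity, so the conjecture is genuinely an extension of Johnson's theorem to arbitrary large fields.

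The genuine gap is the one you name yourself: the reverse inclusion, i.e.\ producing, for each point $x$ and each set $N$ that is not an $\Sa E_K$-neighbourhood of $x$, a gt-henselian topology on $K$ witnessing this failure. Your bootstrap plan (descend to a countable large subfield $K_0$, apply Fact~\ref{fact:will}, ascend back to $K$) founders at the ascent: there is no known procedure for extending a gt-henselian topology from a countable subfield to the whole field, let alone one preserving the non-neighbourhood property, and the constructions in Section~4 of the paper (derivation-twisted refinements) only \emph{refine} an already-existing gt-henselian topology on $K$ rather than build one from data over a subfield. Your further observation that the conjecture subsumes the open existence problem --- whether every uncountable large field carries \emph{any} gt-henselian topology --- is also correct and is a useful way of seeing that the conjecture cannot be settled by purely formal manipulations of the kind in your first paragraph. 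In short: your reduction is sound, but the core of the statement remains exactly as open in your write-up as it is in the paper.
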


By \cite[Thm.~A]{large->henselian} any $\aleph_1$-saturated large field is the fraction field of a henselian local domain.

\begin{conjecture}\label{conj:sat}
Suppose that $K$ is large and $\lambda$-saturated for a sufficiently large cardinal $\lambda$.
Then the $\Sa E_K$-topology is the intersection of all $R$-adic topologies for $R$ ranging over henselian local domains $R \subseteq K$ with $\Frac(R) = K$.
Furthermore the $\Sa E_K$-topology is a field topology if and only if there is a henselian local domain $R\subseteq K$ with $\Frac(R) = K$ such that for any other henselian local domain $S \subseteq K$ with $\Frac(S) = K$ we have $aS \subseteq R$ for some $a \in K^\times$.
\end{conjecture}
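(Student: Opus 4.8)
The plan is to establish the displayed equality of topologies first and then deduce the ``furthermore'' from it. Write $\uptau_R$ for the $R$-adic topology, where $R$ ranges over henselian local domains with $\Frac(R) = K$. One inclusion is immediate: each such $\uptau_R$ is a gt-henselian field topology, hence refines $\Sa E_K$ by Fact~\ref{fact:basic gt}, so $\Sa E_K \subseteq \bigcap_R \uptau_R$. The substance is the reverse inclusion. Suppose $U$ is open in every $\uptau_R$ but is not $\Sa E_K$-open; then there is $a \in U$ with $a \in \cl_{\Sa E_K}(S)$ for $S := K \setminus U$, while $a \notin \cl_{\uptau_R}(S)$ for every $R$ (since $U$ is a $\uptau_R$-neighbourhood of $a$ missing $S$). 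Thus it suffices to prove
\[
(\ast)\qquad a \in \cl_{\Sa E_K}(S)\ \Longrightarrow\ a \in \cl_{\uptau_R}(S)\ \text{for some henselian local } R \text{ with } \Frac(R)=K.
\]

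To prove $(\ast)$ I would combine Johnson's theorem with the henselianization theorem for saturated large fields. By \cite[Thm.~A]{large->henselian} a $\lambda$-saturated large field is the fraction field of a henselian local domain, and in the $\lambda$-saturated setting one expects a rich supply of such domains. The strategy is to pass to a countable elementary substructure $K_0 \preceq K$ carrying the relevant E-set data, apply Fact~\ref{fact:will} over $K_0$ to obtain a gt-henselian topology witnessing the $\Sa E$-closure relation at $a$, and then use $\lambda$-saturation to realize a henselian local domain $R \subseteq K$ whose adic topology reflects this witness over $K$. Because E-sets are cut out by \'etale morphisms, and hence controlled by first-order formulas, the condition ``$R$ is a henselian local domain with $\Frac(R)=K$ and $a \in \cl_{\uptau_R}(S)$'' should be expressible as a type; showing this type is finitely satisfiable — using largeness to produce henselian local domains on finite fragments — and then invoking saturation is the heart of the matter. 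This is where I expect the main obstacle to lie: transferring Johnson's countable result across a henselian local domain, and in particular guaranteeing that the witnessing topology can be taken \emph{locally bounded} (that is, $R$-adic) rather than an arbitrary gt-henselian topology.

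For the ``furthermore'', note first the elementary computation that for henselian local $R,S$ with fraction field $K$ one has $aS \subseteq R$ for some $a \in K^\times$ if and only if $\uptau_R \subseteq \uptau_S$: since $S$ is a bounded open subring, $aS \subseteq R$ exhibits $R$ as a $\uptau_S$-neighbourhood of $0$, and additivity of $R$ then makes $R$, hence every basic set $cR+b$, open in $\uptau_S$. Thus the stated condition says exactly that $\uptau_R$ is the coarsest $R$-adic topology. Granting the equality just proved, if such a coarsest $\uptau_R$ exists then $\bigcap_S \uptau_S = \uptau_R$ is a field topology, so $\Sa E_K$ is a field topology. Conversely, if $\Sa E_K$ is a field topology then by Fact~\ref{fact:basic gt} it is gt-henselian and is the coarsest gt-henselian topology on $K$; it then remains to show that this coarsest topology is itself $R$-adic for some henselian local $R$, for then $\uptau_R = \Sa E_K \subseteq \uptau_S$ for all $S$, which is the domination condition. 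Reducing ``$\Sa E_K$ is a field topology'' to ``$\Sa E_K$ is locally bounded, hence $R$-adic'' is the crux of this converse; the two cases recorded in Section~\ref{section:qc} where $\Sa E_K$ is known to be a field topology are both $R$-adic, which suggests the reduction holds, but establishing it in general — again through $\lambda$-saturation and \cite[Thm.~A]{large->henselian} — is the decisive and hardest step of the whole proposal.
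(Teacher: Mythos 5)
The statement you set out to prove is Conjecture~\ref{conj:sat}: the paper does not prove it, and explicitly records only a weak version, Corollary~\ref{cor:sat}, in which the set under consideration is assumed to be definable. Your proposal does not prove it either, and you effectively say so yourself: the two places where the real content lies are flagged as ``the heart of the matter'' and ``the decisive and hardest step'' and then left uncarried out. What you do establish is correct but easy: the inclusion of the $\Sa E_K$-topology in every $R$-adic topology via Fact~\ref{fact:basic gt}, and the computation that $aS \subseteq R$ for some $a \in K^\times$ is equivalent to the $S$-adic topology refining the $R$-adic one, so that the domination condition in the ``furthermore'' says exactly that some $R$-adic topology is coarsest among all of them.

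The concrete gap in your plan for $(\ast)$ is that $S = K\setminus U$ is an \emph{arbitrary} subset of $K$, not a definable one, so the condition ``$a \in \cl_{\uptau_R}(S)$'' cannot be written into a first-order type, nor can $S$ be traced down to a countable elementary substructure; a finite-satisfiability-plus-saturation argument has nothing to grip. This is exactly the obstruction that confines the paper to Corollary~\ref{cor:sat}: there, definability of $X$ over a countable elementary subfield $F$ is what permits (i) applying Johnson's theorem (Fact~\ref{fact:will}) to the trace $X^*$ of $X$ on $F$, (ii) lifting a neighborhood basis $\mathcal{B}$ of the resulting gt-henselian topology on $F$ to a family $\mathcal{B}^*$ over $K$ by saturation, so that $R = F + \mfrak$ with $\mfrak = \bigcap \mathcal{B}^*$ is a henselian local domain with $\Frac(R) = K$, and (iii) transferring ``$bU$ meets $X^*$'' over $F$ to ``$bU^*$ meets $X$'' over $K$ by elementarity. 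Steps (i) and (iii) are unavailable for a general set $S$, so your reduction to a type is not known to be meaningful, let alone finitely satisfiable. Likewise, in the converse direction of the ``furthermore'' you must show that when the $\Sa E_K$-topology is a field topology it is actually $R$-adic (equivalently, locally bounded) rather than merely gt-henselian; neither your sketch nor anything in the paper supplies this reduction. In short, your proposal is a reasonable account of why the conjecture is plausible, and its easy fragments are sound, but the two steps you defer are precisely what makes the statement a conjecture rather than a theorem.
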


We can prove a weak version of Conjecture~\ref{conj:sat} by applying Johnson's theorem.

\begin{corollary}\label{cor:sat}
Suppose that $K$ is large and highly saturated and let $X \subseteq K^n$ be definable.
Then $X$ is $\Sa E_K$-open if and only if $X$ is open in the $R$-adic topology for every henselian local domain $R \subseteq K$ with $\Frac(R) = K$.
\end{corollary}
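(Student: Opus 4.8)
The plan is to prove the two implications separately. The left-to-right implication is immediate and uses neither definability nor saturation; all the content lies in the converse, where Johnson's theorem and the saturation hypothesis enter.

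For the easy direction, let $R \subseteq K$ be a henselian local domain with $\Frac R = K$. Its $R$-adic topology is a gt-henselian field topology, so by Fact~\ref{fact:basic gt} it refines the $\Sa E_K$-topology on $V(K)$ for every $K$-variety $V$, in particular on $K^n$. Hence every $\Sa E_K$-open subset of $K^n$ is $R$-adically open, and in particular so is $X$ whenever $X$ is $\Sa E_K$-open.

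For the converse I argue by contraposition: assuming $X$ is definable but not $\Sa E_K$-open, I produce a single henselian local domain $R \subseteq K$ with $\Frac R = K$ in which $X$ is not open. First I descend to a countable model. By \lowenheim there is a countable $K_0 \preceq K$ containing the parameters defining $X$; largeness is preserved under elementary equivalence~\cite{Pop-little}, so $K_0$ is large, and since $\Sa E$-openness of a definable set is a first-order condition on its defining parameters~\cite{firstpaper}, the set $X \cap K_0^n$ is not $\Sa E_{K_0}$-open. Now the ``furthermore'' clause of Fact~\ref{fact:will} applies: the $\Sa E_{K_0}$-topology on $K_0^n$ is the intersection of the topologies induced on $K_0^n$ by the gt-henselian field topologies on $K_0$. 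Consequently there is a gt-henselian topology $\uptau_0$ on $K_0$ and a point $\bar a \in X \cap K_0^n$ at which $X \cap K_0^n$ is not $\uptau_0$-open, so that every $\uptau_0$-neighbourhood of $\bar a$ meets $K_0^n \setminus X$.

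It remains to convert this abstract witness into an $R$-adic one over $K$, and this is the main obstacle. Choosing a neighbourhood basis at $0$ for $\uptau_0$, the non-openness at $\bar a$ yields a family of points of $K_0^n \setminus X$ that $\uptau_0$-converges to $\bar a$; the goal is to build a henselian local domain $R \subseteq K$ with $\Frac R = K$ for which this family still converges to $\bar a$, so that every $R$-adic neighbourhood of $\bar a$ meets $K^n \setminus X$ and hence $X$ is not $R$-adically open, contradicting the hypothesis. Both hypotheses on $K$ are used here: $\aleph_1$-saturation is what lets \cite[Thm.~A]{large->henselian} produce henselian local domains with fraction field $K$ in the first place, and the remaining saturation is needed because the convergent family is indexed by a neighbourhood basis of $\uptau_0$, which on a countable field may have cardinality up to $2^{\aleph_0}$, so realizing all the resulting requirements simultaneously forces $K$ to be highly saturated. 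The technical heart is that a general gt-henselian topology need be neither locally bounded nor a V-topology and a priori bears no relation to any valuation, so the passage from $\uptau_0$ to a genuine $R$-adic topology detecting the same definable non-openness must be engineered by hand: I expect to encode the requirement on $R$ as a type realized by saturation and then to invoke \cite[Thm.~A]{large->henselian} to upgrade the resulting local data to a henselian local domain with fraction field $K$.
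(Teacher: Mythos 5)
Your skeleton is the paper's: the easy direction via Fact~\ref{fact:basic gt}, then descent to a countable elementary submodel, Johnson's theorem there, and saturation to climb back up. But two steps are unjustified, and the second is the heart of the proof. The first: you assert that $\Sa E$-openness of a definable set is a first-order condition on its defining parameters, citing \cite{firstpaper}. No such result is available: openness asks, for each point of $X$, for an E-set around that point inside $X$, which is an infinite disjunction over \'etale morphisms, not a formula. The paper sidesteps this by working with a single witness point and with closures: it takes $p \in K^n \setminus X$ in the $\Sa E_K$-closure of $X$, translates so that $p$ is the origin (hence automatically lies in the countable submodel $F$), and transfers E-sets one at a time --- an \'etale $f\colon W \to \A^n$ over $F$ whose image contains the origin base-changes to an E-subset of $K^n$ defined by the same existential $F$-formula, which must meet $X$, and elementarity pulls this back to $F$. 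Your descent can be repaired along these lines (note you must also arrange for the witness point to lie in $K_0$, which you never do), but not as written.

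The second gap is fatal as it stands: the step you yourself call ``the main obstacle'' is precisely the step you do not carry out, and the tool you propose for it is the wrong one. \cite[Thm.~A]{large->henselian} produces, from largeness and $\aleph_1$-saturation, \emph{some} henselian local domain with fraction field $K$, with no control whatsoever linking that domain to $\uptau_0$ or to $X$; no amount of invoking it will make the resulting $R$-adic topology see your convergent family. What the paper actually does is a concrete nonstandard construction: let $\mathcal{B}$ be a neighborhood basis of $\uptau$ at $0$, view $(F,\mathcal{B})$ as a two-sorted structure with a membership relation, use saturation of $K$ to realize an elementary extension $(K,\mathcal{B}^*)$, and set $\mfrak = \bigcap \mathcal{B}^*$ and $R = F + \mfrak$. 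By the proof of \cite[Prop.~8.12]{field-top-2}, this $R$ is a henselian local domain with maximal ideal $\mfrak$ and $\Frac(R) = K$; and the origin lies in the $R$-adic closure of $X$ because, by saturation, it suffices that $bU^*$ meets $X$ for every $U^* \in \mathcal{B}^*$ and $b \in K^\times$, which is the elementary transfer of the statement that $bU$ meets $X^*$ for every $U \in \mathcal{B}$ and $b \in F^\times$. That construction of $R$ from a nonstandard neighborhood filter of the gt-henselian topology --- not the black-box existence theorem --- is the missing idea, and without it your argument stops exactly where the real work begins.
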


\begin{proof}
The left to right direction holds by Fact~\ref{fact:basic gt}.
We suppose that $X$ is not $\Sa E_K$-closed and produce a henselian local $R \subseteq K$ such that $\Frac(R) = K$ and $X$ is not closed in the $R$-adic topology.
Suppose that $p \in K^n \setminus X$ is in the $\Sa E_K$-closure of $X$.
After possibly translating we may suppose that $p$ is the origin.
So every E-subset of $K^n$ containing the origin intersects $X$.
Let $F$ be a countable elementary subfield of $K$ such that $X$ is definable with parameters from $F$.
Let $X^*$ be the subset of $F^n$ defined by any formula with parameters from $F$ which defines $X$.
It follows that every E-subset of $F^n$ containing the origin intersects $X^*$, hence the origin is in the $\Sa E_F$-closure of $X^*$  but not in $X^*$.
By Johnson's theorem there is a gt-henselian topology $\uptau$ on $F$ such that the origin is in the $\uptau$-closure of $X^*$.
Let $\mathcal{B}$ be a neighborhood basis for $\uptau$ at the identity.
By saturation we may suppose that there is a family $\mathcal{B}^*$ of subsets of $K$ such that $(K,\mathcal{B}^*)$ is an elementary extension of $(F,\mathcal{B})$.
Let $\mfrak = \bigcap \mathcal{B}^*$ and let $R = F + \mfrak$.
By the proof of \cite[Prop.~8.12]{field-top-2} $R$ is a henselian local domain with maximal ideal $\mfrak$ and $K = \Frac(R)$.
We show that the origin is in the $R$-adic closure of $X$, equivalently that $b\mfrak$ intersects $X$ for every $b \in K^\times$.
By saturation it is enough to fix $U \in \mathcal{B}$, let $U^*$ be the corresponding element of $\mathcal{B}^*$, and show that $bU^*$ intersects $X$ for every $b \in K^\times$.
This follows by elementary transfer as $bU$ intersects $X^*$ for every $b \in F^\times$.
\end{proof}

E-sets are existentially definable.
Hence if the $\Sa E_K$-topology is a field topology then there is a gt-henselian topology on $K$ which admits a basis consisting of existentially definable sets.

\begin{conjecture}\label{conj:ez gen}
Suppose that there is a gt-henselian topology on $K$ which admits a basis consisting of definable sets.
Then the $\Sa E_K$-topology is a field topology.
\end{conjecture}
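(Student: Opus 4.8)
The plan is to reduce to the countable case and then reformulate the conclusion through Johnson's intersection theorem. Since $K$ carries a gt-henselian topology it is large \cite[Cor.~8.15]{field-top-2}. By the product criterion of \cite[Prop.~4.9]{firstpaper}, $\Sa E_K$ is a field topology if and only if for every $n$ the $\Sa E_K$-topology on $K^n$ agrees with the product topology; since a product of E-sets is an E-set (take the product of the witnessing \'etale morphisms), the product topology is always coarser, so the content is that every E-set in $K^n$ is product-open. I would argue the contrapositive: if $\Sa E_K$ is not a field topology, fix $n$, an E-set $E \subseteq K^n$, and $p \in E$ admitting no E-box inside $E$ at $p$. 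For each fixed complexity bound $c$ the assertion ``no E-box of complexity $\le c$ at $p$ lies in $E$'' is first-order in the finitely many parameters defining $E$ and $p$, because E-sets of bounded complexity form a definable family and both membership in an E-set and the containment $\prod B_i \subseteq E$ are first-order. Passing to a countable elementary subfield $F \preceq K$ containing these parameters and those of the given definable basis $\mathcal B$, all such sentences transfer down, so $p$ is not product-open for $\Sa E_F$ and $\Sa E_F$ is not a field topology. Since ``the definable family $\mathcal B$ is a basis for a gt-henselian field topology'' is itself a first-order scheme over the indexing parameters, the restriction $\uptau_F$ remains gt-henselian with a definable basis, and $F$ is large. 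This reduces the conjecture to the statement that a countable large field carrying a definable gt-henselian topology has $\Sa E_F$ a field topology, equivalently, by Fact~\ref{cor:will}, that it admits a coarsest gt-henselian topology.

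Next I would recast the target using Fact~\ref{fact:will}, which gives for every variety, and in particular for $\A^n$, that the $\Sa E_F$-topology on $F^n$ is the intersection of the topologies $\sigma^n$ as $\sigma$ ranges over gt-henselian topologies on $F$. Each such $\sigma$ is a field topology, so $\sigma^n$ is the product topology of $\sigma$; hence the field-topology condition for $\Sa E_F$ is exactly that this intersection of product topologies coincides with the product of the intersection topology $\bigcap_\sigma \sigma = \Sa E_F$. In other words, $\Sa E_F$ is a field topology precisely when intersection commutes with finite products for the family of gt-henselian topologies, and this commutation is automatic once the family has a coarsest element. So the entire difficulty concentrates in producing a coarsest gt-henselian topology out of the single definable one $\uptau_F$.

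To manufacture such a coarsest topology I would run the construction from the proof of Corollary~\ref{cor:sat}. Passing to a highly saturated elementary extension $K^* \succeq F$ and transferring $\mathcal B$ to $\mathcal B^*$, set $\mfrak = \bigcap \mathcal B^*$ and $R = F + \mfrak$; by the proof of \cite[Prop.~8.12]{field-top-2} this $R$ is a henselian local domain with maximal ideal $\mfrak$ and $\Frac(R) = K^*$. The value of definability is that $\mathcal B$, being a fixed definable family, produces $R$ canonically and compatibly with elementary extension, so the $R$-adic topology is a distinguished, transfer-stable, locally bounded gt-henselian topology attached to $\uptau_F$. The plan is then to show that this distinguished adic topology is coarsest among all gt-henselian topologies, equivalently, by Fact~\ref{fact:basic gt} together with the reformulation above, that it computes $\Sa E_F$; this is exactly the mechanism predicted by Conjecture~\ref{conj:sat}.

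The hard part will be this last step: definability supplies a canonical henselian local domain, but it does not obviously supply the input that forces the associated adic topology down to the \'etale-open topology. In the cases where $\Sa E$ is known to coincide with an adic topology one uses that the ring is quasi-excellent \cite[Cor.~1.4]{field-top-2}, or that the field is t-henselian and not separably closed \cite[Thm.~B]{firstpaper}; mere definability of $\mathcal B$ gives no such excellence- or V-topology-type control over $R$. Equivalently, in the contrapositive one must exclude a definable gt-henselian topology on a field, such as a countable $\pac$ field, where $\Sa E$ is not a field topology \cite[Prop.~7.1]{field-top-2} even though gt-henselian topologies exist by Fact~\ref{fact:will}; thus the crux is to prove that definability of the basis is genuinely incompatible with the non-product E-sets on powers that obstruct the field-topology property. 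I expect that resolving this will require either a definable refinement of Johnson's construction that canonically selects a coarsest topology, or a direct proof that the ring $R$ above has $R$-adic topology equal to $\Sa E_F$.
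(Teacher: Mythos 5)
The statement you are trying to prove is labeled a \emph{conjecture} in the paper, and the paper offers no proof of it: it is explicitly left open. The closest the paper comes is Proposition~\ref{prop:ez}, which obtains the conclusion only under substantially stronger hypotheses ($K$ \'ez with a definable basis, or $K$ perfect with an \emph{existentially} definable basis), and by a completely different and much more direct mechanism: in those settings every (existentially) definable subset of $K$ is the union of an $\Sa E_K$-open set and a finite set, so each basis element is, up to a finite set, $\Sa E_K$-open; this shows the $\Sa E_K$-topology refines $\uptau$, and Fact~\ref{fact:basic gt} gives the reverse refinement, so $\uptau$ equals the $\Sa E_K$-topology. Your proposal never engages with any mechanism of this kind, and, as you candidly admit in your last paragraph, it is not a proof: the step you defer -- showing that the $R$-adic topology attached to $R = F + \mfrak$ is coarsest among gt-henselian topologies, equivalently that it computes $\Sa E_F$ -- \emph{is} the conjecture, merely restated through Fact~\ref{cor:will}. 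The reductions you perform (to countable $K$, then to the existence of a coarsest gt-henselian topology) repackage the problem without reducing its difficulty; your own PAC-field sanity check shows exactly why: some input beyond the bare existence of $R$ must distinguish definable bases from the non-definable ones Johnson's theorem produces on countable PAC fields, and nothing in your construction supplies it.

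There are also concrete gaps in the reductions themselves. The hypothesis gives a basis \emph{consisting of definable sets}; it does not give a uniformly definable family. Your transfer arguments -- ``the definable family $\mathcal B$ is a basis for a gt-henselian field topology is itself a first-order scheme over the indexing parameters,'' and the claim that $R = F + \mfrak$ is ``canonical and compatibly with elementary extension'' -- silently upgrade the hypothesis to uniform definability, which is unjustified: the basis elements may be defined by infinitely many unrelated formulas, and for uncountable $K$ the basis may be uncountable, so its parameters cannot all be placed inside a countable elementary subfield. Making the countable reduction honest would require something like the two-sorted \lowenheim argument of Lemma~\ref{lem:gt wt}, augmented so that defining parameters of the selected basis elements land in $F$, and even then one only gets a countable subfamily, not the original basis. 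None of this is fatal to the reduction in principle, but it is not routine, and in any case repairing it still leaves you facing the unproved core step, so the proposal does not establish the statement.
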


Conjecture~\ref{conj:ez gen} holds for a class of large fields which contains the main examples of logically tame fields.
We say that $K$ is \textbf{\'ez} if $K$ is large and every definable subset of every $K^n$ is a finite union of definable $\Sa E_K$-open subsets of Zariski closed sets.
\'Ez fields are perfect and it is an informal conjecture that all known logically tame perfect large fields, and in particular all logically tame perfect fields known before \cite{cxf-paper}, are \'ez.
See \cite{secondpaper} for background.

\begin{proposition}\label{prop:ez}
Suppose that $\uptau$ is a non-discrete field topology on $K$ and one of the following holds.
\begin{enumerate}[leftmargin=*]
\item $K$ is \'ez and there is a basis for $\uptau$ consisting of definable sets.
\item $K$ is perfect and there is a basis for $\uptau$ consisting of existentially definable sets.
\end{enumerate}
Then the $\Sa E_K$-topology on $V(K)$ refines the $\uptau$-topology for every $K$-variety $V$.
Hence if $\uptau$ is additionally gt-henselian then $\uptau$ agrees with the $\Sa E_K$-topology.
\end{proposition}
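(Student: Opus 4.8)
The plan is to reduce the statement for an arbitrary $K$-variety $V$ to the case $V = \A^1$, and then to prove the one-dimensional case by a structural analysis of $\uptau$-open definable sets. For the reduction I would use the functoriality of the \'etale-open topology \cite{firstpaper}: projections $K^n \to K$ are $\Sa E_K$-continuous and locally closed immersions induce the subspace $\Sa E_K$-topology. Covering $V$ by affine charts $V_0 \hookrightarrow \A^N$, a basic $\uptau$-open subset of $V_0(K)$ is the preimage of a box $\prod_i U_i$ with each $U_i$ a $\uptau$-open subset of $K$; once we know each $U_i$ is $\Sa E_K$-open in $K$, continuity of the projections makes the box $\Sa E_K$-open in $K^N$ and the immersion makes its preimage $\Sa E_K$-open in $V_0(K)$. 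Thus it suffices to show that every $\uptau$-open member of the given basis of $K = \A^1(K)$ is $\Sa E_K$-open.

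Fix such a $\uptau$-open set $U \subseteq K$. In case (1) the definition of \'ez writes $U$ as a finite union of definable $\Sa E_K$-open subsets of Zariski closed subsets of $K$; since the proper Zariski closed subsets of $\A^1$ are finite, this gives $U = O \cup F$ with $O$ a definable $\Sa E_K$-open subset of $K$ and $F$ finite. In case (2) I would first establish the same decomposition for existentially definable sets over a perfect field, using that the parametrizing morphism is generically smooth and---by perfectness---generically separable, so that its image contains a $\Sa E_K$-open set off a lower-dimensional (hence, in $\A^1$, finite) set; this is the content I expect to import from \cite{secondpaper}. In either case $O \subseteq \Sa E_K\text{-int}(U)$, and because $\uptau$ is a non-discrete Hausdorff field topology (so deleting the finite set $F$ does not destroy any $\uptau$-limit) the set $O$ is $\uptau$-dense in $U$; equivalently, $U$ is $\Sa E_K$-open away from $F$.

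It remains to show that each exceptional point lies in the $\Sa E_K$-interior of $U$. The one clean input is that a $\uptau$-open set has no $\Sa E_K$-isolated points: if an E-set $E = f(W(K))$, with $f\colon W\to\A^1$ \'etale and $f(w_0)=p$, met $U$ only in $p$, then $f^{-1}(U)$ would be a $\Sa E_K$-open subset of $W(K)$ containing $w_0$ whose image lies in $E\cap U=\{p\}$, so $f^{-1}(U)\subseteq f^{-1}(p)$ is finite; this contradicts the fact \cite{firstpaper} that, for large $K$, the $\Sa E_K$-topology on the smooth curve $W(K)$ has no isolated points. Applying \'ez (resp. the case (2) decomposition) to $U^c$ and again discarding finite pieces, an exceptional point $p$ can fail to be $\Sa E_K$-interior only if it is a $\Sa E_K$-frontier point of a full-dimensional $\Sa E_K$-open set $O^* \subseteq U^c$. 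Since $O^* \subseteq U^c$ and $U$ is a $\uptau$-open neighbourhood of $p$, we have $p \notin \overline{O^*}^{\uptau}$, so the task is to upgrade this $\uptau$-level separation to $\Sa E_K$-level separation, i.e.\ to produce an E-set around $p$ contained in $U$.

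This upgrade is the main obstacle. The two topologies are coupled, and any purely topological attempt to convert $\overline{O^*}^{\uptau}$-separation into $\overline{O^*}^{\Sa E_K}$-separation is circular, since it amounts to the very conclusion that $\Sa E_K$ refines $\uptau$ at $p$. I expect to break this using the fine structure of \'ez (resp.\ perfect large) fields rather than topology alone: concretely, the description of the $\Sa E_K$-closure of a definable set and the finiteness of its $\Sa E_K$-frontier, together with an explicit \'etale parametrization producing $K$-points of $U$ accumulating at $p$, should force $p \notin \overline{O^*}^{\Sa E_K}$ and hence $p \in \Sa E_K\text{-int}(U)$; this is the step I anticipate being delicate and drawing most heavily on \cite{secondpaper}. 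Once the refinement of $\uptau$ by $\Sa E_K$ on every $V(K)$ is in hand, the final clause is immediate: if $\uptau$ is also gt-henselian then $\uptau$ refines $\Sa E_K$ by Fact~\ref{fact:basic gt}, so the two topologies coincide on every $V(K)$.
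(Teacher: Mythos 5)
Your reduction to $V = \A^1$ and your decomposition $U = O \cup F$ (with $O$ definable and $\Sa E_K$-open, $F$ finite) match the paper's proof, which cites \cite[Lemma~4.8]{firstpaper} for the reduction and the \'ez property resp.\ \cite[Cor.~A]{secondpaper} for the decomposition; your handling of the final clause via Fact~\ref{fact:basic gt} is also the paper's. But your last two paragraphs are a genuine gap, not a proof: you set yourself the task of showing that every exceptional point of $F$ lies in the $\Sa E_K$-interior of $U$, you correctly observe that the purely topological attempts at this are circular, and you then defer the decisive step to unspecified ``fine structure'' of \'ez and perfect large fields that you never supply. As written, the argument does not close.

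The missing idea is that the step you got stuck on is unnecessary, because one does not need the given basic set $U$ itself to be $\Sa E_K$-open; it suffices that each point of a $\uptau$-open set have \emph{some} $\Sa E_K$-open neighborhood inside it, and here the group structure does all the work. After translating the point in question to $0$, take a neighborhood basis $\mathcal{B}$ of $\uptau$ at $0$ consisting of definable (resp.\ existentially definable) sets, fix $U \in \mathcal{B}$, and use continuity of subtraction to pick $V^* \in \mathcal{B}$ with $V^* - V^* \subseteq U$. Decompose $V^* = O \cup A$ with $O$ nonempty $\Sa E_K$-open and $A$ finite ($O$ is nonempty because $V^*$ is infinite, $\uptau$ being a non-discrete Hausdorff field topology). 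Now pick any $\beta \in O$ and set $V = O - \beta$: then $V$ is $\Sa E_K$-open since translation is an isomorphism of $\A^1$ and hence an $\Sa E_K$-homeomorphism, $0 = \beta - \beta \in V$, and $V \subseteq V^* - V^* \subseteq U$. This single translation trick replaces your entire frontier analysis at exceptional points; no input beyond the decomposition and the field-topology axioms is needed, and in particular nothing further is imported from \cite{secondpaper}.
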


\begin{proof}
The second claim follows from the first by Fact~\ref{fact:basic gt}.
We prove the first claim.
By \cite[Lemma~4.8]{firstpaper} it suffices to show that the $\Sa E_K$-topology on $K$ refines $\uptau$.
If $K$ is \'ez then any definable subset of $K$ is the union of an $\Sa E_K$-open set and a finite set.
If $K$ is perfect then any existentially definable subset of $K$ is the union of an $\Sa E_K$-open set and a finite set~\cite[Cor.~A]{secondpaper}.
Hence we may suppose that there is a neighborhood basis $\mathcal{B}$ of $\uptau$ at $0$ such that every $U \in \mathcal{B}$ is the union of an non-empty $\Sa E_K$-open set and a finite set.
It suffices to fix $U \in \mathcal{B}$ and produce $\Sa E_K$-open $V$ such that $0 \in V \subseteq U$.
Fix $V^* \in \mathcal{B}$ such that $V^* - V^* \subseteq U$.
We have $V^* = O\cup A$ for finite $A$ and nonempty $\Sa E_K$-open $O \subseteq K$.
Note that $O$ is also $\uptau$ open as $A$ is $\uptau$-closed.
Take $V = O - \beta$ for any $\beta \in O$.
\end{proof}



\section{Derivations and gt-henselian topologies}
We suppose the following throughout this section:
\begin{enumerate}[leftmargin=*]
\item $K$ is characteristic zero,
\item $\uptau$ is a field topology on $K$,
\item and $I$ is a set of derivations $K \to K$.
\end{enumerate}
Let $\sigma_I$ be the embedding $K \to K \times K^I$ given by $\sigma_I(a) = (a,(\der(a))_{\der \in I}))$.
Equip $K \times K^I$ with the product topology induced by $\uptau$ and let $\uptau_I$ be the topology induced on $K$ by $\sigma_I$.
If $I = \{\der\}$ we let $\uptau_\der$ be $\uptau_I$.
Now $\uptau_I$ is the topology with basis  sets of the form $$\{ a \in K : a \in U, \der_{1}(a) \in V_1,\ldots,\der_{n}(a) \in V_n\} = U \cap \der^{-1}_{1}(V_1)\cap \cdots \cap \der^{-1}_{n}(V_n)$$ for $\uptau$-open $U,V_1,\ldots,V_n \subseteq K$ and distinct $\der_1,\ldots,\der_n\in I$.
Note that $$\{ (a_1,\ldots,a_n) \in K^n : (a_{j_1},\ldots,a_{j_\ell},\der_{1}(a_{i_1}),\ldots,\der_{m}(a_{i_m})) \in W\}$$ is $\uptau_\der$-open for any $\uptau$-open $W \subseteq K^{\ell + m}$, $\der_1,\ldots,\der_m\in I$, and $j_1,\ldots,j_\ell,i_1,\ldots,i_m \in \{1,\ldots,n\}$.

\begin{lemma}\label{lem:der 0}
The topology $\uptau_I$ is a field topology refining $\uptau$, we have $w(\uptau_I)\le |I| + w(\uptau)$, and $\uptau_I$ strictly refines $\uptau$ if and only if some $\der \in I$ is not continuous.
\end{lemma}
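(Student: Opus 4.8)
The plan is to verify the three assertions in order, the first being the substantial one. Refinement of $\uptau$ is immediate, since taking the empty family of derivations shows that every $\uptau$-open set is $\uptau_I$-open. The naive temptation is to present $\uptau_I$ as a subspace topology coming from the topological ring $K \times K^I$, but $\sigma_I$ is \emph{not} a ring embedding for the componentwise product, because $\der(ab) = a\,\der(b) + b\,\der(a) \ne \der(a)\der(b)$. The fix is to equip $K \times K^I$ instead with the square-zero (``dual number'') multiplication $(a,(x_\der)) \cdot (b,(y_\der)) = (ab, (a y_\der + b x_\der)_\der)$, making it the trivial extension $K \ltimes K^I$; with the product topology this is a topological ring, since each structure map is coordinatewise a polynomial in $\uptau$-continuous data. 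The Leibniz rule then says exactly that $\sigma_I$ is a ring homomorphism into $K \ltimes K^I$, so pulling back along $\sigma_I$ makes addition, negation, and multiplication $\uptau_I$-continuous.

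The one point requiring care is inversion, because $K \ltimes K^I$ is not a field. Here I would argue directly: for $a \in K^\times$ the element $\sigma_I(a) = (a,(\der(a)))$ is a unit of $K \ltimes K^I$ with inverse $(a^{-1}, (-a^{-2}\der(a)))$, and since $\sigma_I$ is multiplicative this inverse is exactly $\sigma_I(a^{-1})$, reflecting $\der(a^{-1}) = -a^{-2}\der(a)$. The map $(a,(x_\der)) \mapsto (a^{-1}, (-a^{-2}x_\der))$ is continuous on $\{a \in K^\times\} \times K^I$, so pulling back shows inversion is $\uptau_I$-continuous on $K^\times$. Finally $\uptau_I$ is Hausdorff because it refines the Hausdorff topology $\uptau$; this proves (1). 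One can equally skip the ring $K \ltimes K^I$ and check continuity of $+$, $\cdot$, ${}^{-1}$ by hand on basic open sets $U \cap \der_1^{-1}(V_1) \cap \cdots \cap \der_n^{-1}(V_n)$, using the sum, Leibniz, and quotient rules together with continuity of the corresponding polynomial maps in $\uptau$; this is the main computation either way.

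For the weight bound, a subbasis for $\uptau_I$ is obtained by pulling back the product subbasis: it consists of the members of a fixed basis of $\uptau$ of cardinality $w(\uptau)$, together with the sets $\der^{-1}(V)$ for $\der \in I$ and $V$ ranging over that basis. This subbasis has cardinality at most $w(\uptau) + |I|\cdot w(\uptau)$, which is at most $|I| + w(\uptau)$ since $w(\uptau)$ is infinite (as $K$ is infinite in characteristic zero and a Hausdorff topology on an infinite set cannot have finitely many open sets). As the subbasis is infinite, the basis of finite intersections has the same cardinality, whence $w(\uptau_I) \le |I| + w(\uptau)$.

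For (3), the key observation is that each $\der \in I$ is automatically continuous as a map $(K,\uptau_I) \to (K,\uptau)$, since $\der = \pi_\der \circ \sigma_I$ is the composite of the $\uptau_I$-continuous $\sigma_I$ with a coordinate projection; equivalently $\der^{-1}(V)$ is a basic $\uptau_I$-open set. If every $\der \in I$ is $\uptau$-continuous, then every basic set $U \cap \der_1^{-1}(V_1) \cap \cdots \cap \der_n^{-1}(V_n)$ is already $\uptau$-open, so $\uptau_I = \uptau$. Conversely, if some $\der \in I$ failed to be $\uptau$-continuous while $\uptau_I = \uptau$, then $\der$ would be $\uptau$-continuous by the observation above, a contradiction; hence $\uptau_I$ strictly refines $\uptau$. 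I expect the ring-theoretic bookkeeping in (1) --- in particular the continuity of inversion over the non-field $K \ltimes K^I$ --- to be the only real obstacle; parts (2) and (3) are then formal.
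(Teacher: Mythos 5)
Your proof is correct, and it takes a genuinely different route through the one substantial step. The paper also pulls back the product topology on $K \times K^I$, but it only uses the product as a topological \emph{additive group} (so $\uptau_I$ is an additive group topology for free), and then verifies continuity of multiplication and inversion by hand on basic open sets $U \cap \der_1^{-1}(V_1)\cap\cdots\cap\der_n^{-1}(V_n)$: the Leibniz rule turns the condition $ab \in O$ into membership of $(a,b,\der_1(a),\der_1(b),\ldots,\der_n(a),\der_n(b))$ in an explicit $\uptau$-open $W \subseteq K^{2(n+1)}$, and inversion is handled by the quotient rule together with the $\uptau$-continuous map $h(a,a_1',\ldots,a_n')=(a,-a_1'/a^2,\ldots,-a_n'/a^2)$. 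You instead make the whole product a topological ring $K \ltimes K^I$ (square-zero multiplication), so that the Leibniz rule says precisely that $\sigma_I$ is a ring embedding and continuity of addition and multiplication becomes a formal pullback; your inversion step is then essentially the same computation as the paper's map $h$, just written on $K^\times \times K^I$, and some such direct argument is genuinely needed in both proofs since $K \ltimes K^I$ is not a field. Your packaging is more conceptual --- it identifies the dual-numbers mechanism behind the construction and would apply verbatim to derivations valued in any topological $K$-algebra whose unit group is open with continuous inversion --- whereas the paper's hands-on computation is self-contained and rehearses exactly the style of explicit open-set argument that is reused in the proof of Proposition~\ref{prop:key}. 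The remaining parts match: your weight bound (subbasis counting, using that a Hausdorff topology on an infinite set has infinite weight) is the paper's ``weight of the product topology'' bound, and your strict-refinement argument is the paper's, merely phrased contrapositively.
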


\begin{proof}
It is clear from the definition that $\uptau_I$ refines $\uptau$.
If each $\der \in I$ is continuous then $\sigma_I$ is continuous hence $\uptau_I = \uptau$.
If $\der \in I$ is not continuous then $\der^{-1}(U)$ is not $\uptau$-open for some $\uptau$-open $U \subseteq K$, hence $\uptau_I$ strictly refines $\uptau$.
The weight inequality follows as $w(\uptau_I)$ is bounded above by the weight of the product topology on $K \times K^I$, and this is $|I| + w(\uptau)$.

\medskip
We show that $\uptau_I$ is a field topology.
Consider $K \times K^I$ to be a group with pointwise addition.
Then $\sigma_I$ embeds the additive group of $K$ into $K\times K^I$, so $\uptau_I$ is an additive group topology.
We show that multiplication is $\uptau_I$-continuous.
Fix $\uptau$-open $U,V_1,\ldots,V_n \subseteq K$ and $\der_1,\ldots,\der_n \in I$.
Set $O = U \cap \der^{-1}_{1}(V_1)\cap \cdots \cap \der^{-1}_{n}(V_n)$.
We show that the set of $(a,b) \in K^2$ with $ab \in O$ is $\uptau_I$-open.
For any $a,b \in K$ we have $ab \in O$ if and only if $ab \in U$ and $a\der_i(b) + b\der_i(a) \in V_i$ for $i = 1,\ldots,n$.
Let $W$ be the set of $(a,b,a'_1,b'_1,\ldots,a'_n,b'_n) \in K^{2(n+1)}$ such that $ab \in U$ and $ab'_i + ba'_i \in V_i$ for $i = 1,\ldots,n$.
So $W$ is $\uptau$-open.
Then for any $a,b \in K$ we have $ab \in O$ if and only if $(a,b,\der_1(a),\der_1(b),\ldots,\der_n(a),\der_n(b)) \in W$.



\medskip
It remains to show that multiplicative inverse is a $\uptau_I$-continuous function $K^\times \to K^\times$.
Let $U,V_1,\ldots,V_n$, $\der_1,\ldots,\der_n$, and  $O$ be as above.
We show that the set of $a \in K^\times$ such that $1/a \in O$ is $\uptau_I$-open.
We have 
$$ \der_i(1/a) = -\frac{\der_i(a)}{a^2}\quad\text{for  } i = 1,\ldots,n.$$
Now $h(a,a'_1,\ldots,a'_n) = (a,-a'_1/a^2,\ldots,-a'_n/a^2)$ gives a $\uptau$-continuous map $(K^\times)^n \to (K^\times)^n$.
Let $W$ be the set of $(a,a'_1,\ldots,a'_n) \in (K^\times)^n$ such that $h(a,a'_1,\ldots,a'_n)\in U \times V_1\times\cdots\times V_n$.
Then $W$ is $\uptau$-open and $1/a \in O$ if and only if $(a,\der_1(a),\ldots,\der_n(a)) \in W$ for any $a \in K^\times$.
\end{proof}

\begin{lemma}\label{lem:discrete}
The following are equivalent.
\begin{enumerate}[leftmargin=*]
\item $\uptau_I$ is not discrete.
\item $U\cap \der^{-1}_{1}(U)\cap \cdots \cap \der^{-1}_{n}(U) \ne \{0\}$ for every $\uptau$-neighborhood $U$ of $0$ and $\der_1,\ldots,\der_n\in I$.
\end{enumerate}
\end{lemma}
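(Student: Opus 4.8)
The plan is to reduce non-discreteness to a statement about basic $\uptau_I$-neighborhoods of $0$ and then to pass between the general shape of such a neighborhood and the uniform shape appearing in item~(2) by intersecting the witnessing $\uptau$-neighborhoods.

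First I would record the standard fact that, since $\uptau_I$ is an additive group topology by Lemma~\ref{lem:der 0}, it is discrete if and only if $\{0\}$ is $\uptau_I$-open, which (as $\uptau_I$ has the stated basis) holds if and only if some basic $\uptau_I$-open set containing $0$ equals $\{0\}$. A basic open set $U \cap \der_1^{-1}(V_1)\cap \cdots \cap \der_n^{-1}(V_n)$ contains $0$ exactly when $U$ and each $V_i$ is a $\uptau$-neighborhood of $0$, using that $\der(0) = 0$ for every derivation $\der$. Hence $\uptau_I$ is \emph{not} discrete if and only if
\[
U \cap \der_1^{-1}(V_1)\cap \cdots \cap \der_n^{-1}(V_n) \ne \{0\}
\]
for all $\uptau$-neighborhoods $U,V_1,\ldots,V_n$ of $0$ and all distinct $\der_1,\ldots,\der_n \in I$. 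This reformulation is the crux, and the rest is comparing it to item~(2).

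For $(1)\Rightarrow(2)$ I would observe that the set in~(2) is the special case $V_1 = \cdots = V_n = U$ of the displayed set, after collapsing any repeated derivations among $\der_1,\ldots,\der_n$ to the distinct ones (which leaves the intersection unchanged, since $\der^{-1}(U)\cap\der^{-1}(U) = \der^{-1}(U)$). For the converse $(2)\Rightarrow(1)$, given arbitrary witnessing neighborhoods $U,V_1,\ldots,V_n$ I would set $W = U \cap V_1 \cap \cdots \cap V_n$, a $\uptau$-neighborhood of $0$ as a finite intersection of such. Since $W \subseteq U$ and $W \subseteq V_i$ for each $i$, monotonicity gives
\[
W \cap \der_1^{-1}(W)\cap \cdots \cap \der_n^{-1}(W) \subseteq U \cap \der_1^{-1}(V_1)\cap \cdots \cap \der_n^{-1}(V_n),
\]
so applying~(2) to $W$ and $\der_1,\ldots,\der_n$ yields a nonzero element of the right-hand side. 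The argument is essentially bookkeeping: the only points requiring care are the direction of the inclusions when refining to $W$ and the cosmetic mismatch that~(2) permits repeated derivations while the basis is indexed by distinct ones, both handled as above. I do not anticipate a genuine obstacle.
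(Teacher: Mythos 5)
Your proof is correct and follows the same route as the paper: the paper simply notes that $\uptau_I$ is discrete iff $0$ is isolated and that the sets in (2) form a neighborhood basis for $\uptau_I$ at $0$, which is exactly what you verify in detail via the intersection $W = U \cap V_1 \cap \cdots \cap V_n$.
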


\begin{proof}
Note that $\uptau_I$ is discrete if and only if $0$ is an isolated point.
Note also that sets of the form described in (2) form a neighborhood basis for $\uptau_I$ at $0$.
\end{proof}

\begin{lemma}\label{lem:loc bounded}
Suppose that $\uptau$ is locally bounded and $I$ is finite.
Then $\uptau_I$ is locally bounded.
\end{lemma}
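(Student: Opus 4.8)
The plan is to exhibit an explicit bounded $\uptau_I$-open neighborhood of $0$, built from a bounded $\uptau$-open set using the finiteness of $I$. Write $I = \{\der_1,\ldots,\der_n\}$. Since $\uptau$ is locally bounded there is a nonempty bounded $\uptau$-open set, and translating it by one of its points (bounded sets are closed under affine transformations) I obtain a bounded $\uptau$-open neighborhood $B$ of $0$. I would then set $O = B \cap \der_1^{-1}(B) \cap \cdots \cap \der_n^{-1}(B)$. As each $\der_i(0) = 0 \in B$, we have $0 \in O$, and since $I$ is finite this is a basic $\uptau_I$-open neighborhood of $0$; the finiteness of $I$ is exactly what keeps $O$ open. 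The claim is that $O$ is $\uptau_I$-bounded, which gives the result.

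To verify boundedness I fix a basic $\uptau_I$-neighborhood $W = U \cap \der_1^{-1}(V_1)\cap\cdots\cap\der_n^{-1}(V_n)$ of $0$ and seek $\lambda \in K^\times$ with $\lambda O \subseteq W$. Using continuity of addition, choose $\uptau$-neighborhoods $V_i'$ of $0$ with $V_i' + V_i' \subseteq V_i$. Since $B$ is $\uptau$-bounded, \cite[Lemma~2.1]{Prestel1978} yields a $\uptau$-neighborhood $N$ of $0$ with $NB \subseteq U \cap V_1' \cap \cdots \cap V_n'$. Now for any $a \in O$ and any $\lambda$ with $\lambda \in N$ and $\der_i(\lambda) \in N$ for all $i$, the Leibniz identity $\der_i(\lambda a) = \lambda\,\der_i(a) + \der_i(\lambda)\,a$ places $\der_i(\lambda a)$ in $NB + NB \subseteq V_i' + V_i' \subseteq V_i$, while $\lambda a \in NB \subseteq U$; hence $\lambda a \in W$, so $\lambda O \subseteq W$ as required.

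The main obstacle is the term $\der_i(\lambda)\,a$ in the product rule: because $a$ ranges over the bounded set $B$ and $\der_i(\lambda)$ is not under our direct control, one cannot simply scale as in the undifferentiated case. The resolution is to choose $\lambda$ not merely small in $\uptau$ but inside the $\uptau_I$-neighborhood $N \cap \der_1^{-1}(N) \cap \cdots \cap \der_n^{-1}(N)$ of $0$, which forces $\der_i(\lambda) \in N$ simultaneously for all $i$. A nonzero such $\lambda$ exists precisely when $\uptau_I$ is not discrete, by Lemma~\ref{lem:discrete}. Finally, if $\uptau_I$ happens to be discrete, then $\{0\}$ is a nonempty bounded $\uptau_I$-open set, so $\uptau_I$ is locally bounded in that case as well, completing the argument.
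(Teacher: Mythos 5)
Your proof is correct and follows essentially the same approach as the paper: handle the discrete case trivially, take a bounded $\uptau$-open neighborhood $B$ of $0$, form $B \cap \bigcap_i \der_i^{-1}(B)$, and show it is $\uptau_I$-bounded by combining boundedness of $B$, the Leibniz rule, and Lemma~\ref{lem:discrete} to find a suitable nonzero scalar $\lambda$ with all $\der_i(\lambda)$ small. The only difference is that you carry out the general finite case directly, whereas the paper writes out only the case $I = \{\der\}$ and leaves the (routine) modifications to the reader.
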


\begin{proof}
We only treat the case when $I = \{\der\}$.
The general case follows by slight modifications of our proof.
If $\uptau_\der$ is discrete then $\uptau_\der$ is trivially locally bounded.
Hence we may suppose that $\uptau_\der$ is not discrete.
Let $U$ be a $\uptau$-bounded open neighborhood of zero.
Set $P = U \cap \der^{-1}(U)$.
We show that $P$ is $\uptau_\der$-bounded.
Let $P^*$ be an arbitrary $\uptau$-open neighborhood of $0$.
It is enough to show that some non-zero multiple of $P$ is contained in $P^* \cap \der^{-1}(P^*)$.
Fix a $\uptau$-neighborhood $Q$ of $0$ such that $Q + Q \subseteq P^*$.
As $U$ is bounded there is a $\uptau$-neighborhood $V$  of $0$ such that $VU \subseteq Q$.
By Lemma~\ref{lem:discrete} there is $\lambda \in V \setminus \{0\}$ such that $\der(\lambda) \in V$.
Then $\lambda U \subseteq Q \subseteq P^*$, hence $\lambda P \subseteq P^*$.
Furthermore if $a \in P$ then we have $\der(\lambda a) = \lambda \der(a) + a \der(\lambda) \in Q + Q \subseteq P^*$, hence $\lambda a \in \der^{-1}(P^*)$.
Hence $\lambda P \subseteq P^* \cap \der^{-1}(P^*)$.
\end{proof}

\begin{proposition}\label{prop:key}
Suppose that $\uptau$ is gt-henselian and $\uptau_I$ is  not discrete.
Then $\uptau_I$ is also gt-henselian.
\end{proposition}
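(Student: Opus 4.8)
The plan is to verify the root-finding clause of gt-henselianity for $\uptau_I$ directly, feeding the gt-henselianity of $\uptau$ through the implicit-differentiation formula for the derivative of a simple root. Since $\uptau_I$ is a non-discrete field topology by hypothesis together with Lemma~\ref{lem:der 0}, only the root condition remains. It suffices to treat a basic $\uptau_I$-neighborhood of $-1$, say $\Omega = U \cap \der_1^{-1}(V_1) \cap \cdots \cap \der_n^{-1}(V_n)$, where $U$ is a $\uptau$-neighborhood of $-1$, the $\der_i$ lie in $I$, and each $V_i$ is a $\uptau$-neighborhood of $\der_i(-1) = 0$ (derivations annihilate $\Q$, so $\der_i(-1)=0$); fix $d \in \N$ and write $p_a(x) = x^{d+2} + x^{d+1} + a_d x^d + \cdots + a_0$.

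The crux is the following computation. If $r$ is a simple root of $p_a$, then applying $\der_i$ to the identity $p_a(r) = 0$ in $K$ and using the Leibniz rule together with $\der_i|_\Q = 0$, the coefficient of $\der_i(r)$ is exactly the formal derivative $p_a'(r)$, yielding
$$\der_i(r) = -\frac{\sum_{j=0}^d \der_i(a_j)\, r^j}{p_a'(r)},$$
which is legitimate since $p_a'(r) \ne 0$ by simplicity. The right-hand side is a fixed rational function $F_i$ of $r$, the $a_j$, and the $\der_i(a_j)$ (with integer coefficients), and at the base point $r = -1$, all $a_j = 0$, all $\der_i(a_j) = 0$ its numerator vanishes while its denominator equals $p_0'(-1) = (-1)^{d+1} \ne 0$, where $p_0(x) = x^{d+2}+x^{d+1} = x^{d+1}(x+1)$ has $-1$ as a simple root. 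As rational functions are continuous away from their poles in any field topology, each $F_i$ is $\uptau$-continuous at the base point with value $0$.

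Now I assemble the neighborhoods. By $\uptau$-continuity of each $F_i$ at the base point and $0 \in V_i$, choose $\uptau$-neighborhoods $U_i'$ of $-1$ (constraining $r$), $A_i$ of $0$ (constraining each $a_j$), and $C_i$ of $0$ (constraining each $\der_i(a_j)$) so that $r \in U_i'$, $a_j \in A_i$, $\der_i(a_j) \in C_i$ force $F_i \in V_i$. Set $U' = U \cap U_1' \cap \cdots \cap U_n'$, a $\uptau$-neighborhood of $-1$, and apply the gt-henselianity of $\uptau$ to $U'$ and $d$ to obtain a $\uptau$-neighborhood $V_0$ of $0$ such that $p_a$ has a simple root in $U'$ whenever all coefficients lie in $V_0$. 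Finally put
$$V = \left( V_0 \cap \bigcap_{i=1}^n A_i \right) \cap \bigcap_{i=1}^n \der_i^{-1}(C_i),$$
a $\uptau_I$-neighborhood of $0$ since $0 \in V_0 \cap \bigcap_i A_i$ and $\der_i(0) = 0 \in C_i$. For $a_0, \ldots, a_d \in V$ the choice of $V_0$ produces a simple root $r \in U' \subseteq U$; since $r \in U_i'$, $a_j \in A_i$, and $\der_i(a_j) \in C_i$, the displayed formula gives $\der_i(r) = F_i \in V_i$ for each $i$, whence $r \in \Omega$. This exhibits the required simple root and establishes gt-henselianity of $\uptau_I$.

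The one point demanding care is that a \emph{single} root $r$ must simultaneously meet the original $\uptau$-constraint and all $n$ derivative constraints; this is exactly why $U'$ is formed as the common refinement of $U$ and the $U_i'$ \emph{before} invoking gt-henselianity, so that the root returned is automatically admissible for every $F_i$. The continuity of each $F_i$ is otherwise routine, as is the verification that $V$ is $\uptau_I$-open, so I expect no substantive obstacle beyond this bookkeeping.
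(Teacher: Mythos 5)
Your proof is correct and takes essentially the same route as the paper's: both differentiate the identity $p_a(r)=0$ to obtain $\der_i(r) = -\bigl(\sum_{j} \der_i(a_j)r^j\bigr)/p_a'(r)$ for a simple root $r$, observe that this rational function is $\uptau$-continuous with value $0$ near the base point $(r, a, \der_i(a)) = (-1,0,0)$ since the denominator equals $(-1)^{d+1}\ne 0$ there, and then shrink the coefficient neighborhood via gt-henselianity of $\uptau$ so that the delivered simple root automatically satisfies all derivative constraints. The only differences are bookkeeping: you carry separate $V_i$, $U_i'$, $A_i$, $C_i$ for each derivation and intersect at the end, where the paper works with a single $V$, $Q$, $O$.
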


We need $\uptau_I$ to be non-discrete as gt-henselian topologies are non-discrete by definition.

\begin{proof}
Fix $d \ge 1$ and let $\alpha = (\alpha_0,\ldots,\alpha_d)$ range over $K^{d + 1}$.
Declare
$$ p_\alpha(x) = x^{d + 2} + x^{d+1} + \alpha_d x^d + \cdots + \alpha_1 x + \alpha_0 \in K[x].$$
Fix a $\uptau$-neighborhood $U$ of $-1$, a $\uptau$-neighborhood $V$ of $0$, and $\der_1,\ldots,\der_n\in I$.
We produce a $\uptau$-neighborhood $O$ of $0$ such that if $\alpha_i \in O \cap \der^{-1}_1(O) \cap \cdots \cap \der^{-1}_n(O)$ for $i = 0,\ldots,d$, then there is a simple root $\beta$ of $p_\alpha$ in $U\cap \der^{-1}_{1}(V)\cap\cdots\cap\der^{-1}_{n}(V)$.
This shows that $\uptau_I$ is gt-henselian.

\medskip
Suppose that $g$ is a polynomial in $K[x]$ and let $i$ range over $\{1,\ldots,n\}$.
Let $\der_i g$ be given by applying $\der_i$ to each coefficient of $g$.
If $\beta \in K$ is a root of $g$ then we have
\[
0 = \der_i(0) = \der_i(g(\beta)) = (\der_i g)(\beta) + g'(\beta) \der_i(\beta).
\]
Hence if $\beta$ is additionally simple then
\[
\der_i(\beta) = -\frac{(\der_i g)(\beta)}{g'(\beta)}.
\]
It follows that if $\beta\in K$ is a simple root of $p_\alpha$ then
\[
\der_i(\beta) = -\frac{\der_i(\alpha_d) \beta^d + \cdots + \der_i(\alpha_1) \beta + \der_i(\alpha_0)}{(d + 2) \beta^{d+1} + (d+1)\beta^d + d \alpha_d \beta^{d - 1} + \cdots + 2\alpha_2 \beta + \alpha_1}.
\]
Let $W$ be the set of $(a_0,\ldots,a_d,b, a'_0,\ldots,a'_d) \in K^{2d + 3}$ such that
$$ (d + 2) b^{d+1} + (d+1)b^d + d a_d b^{d - 1} + \cdots + 2a_2 b + a_1 \ne 0.$$
So $W$ is $\uptau$-open neighborhood of $(0,\ldots,0,-1,0,\ldots,0)$.
Let $h \colon W \to K$ be given by
$$h(a_0,\ldots,a_d,b, a'_0,\ldots,a'_d) = -\frac{a'_d b^d + \cdots + a'_1 b + a'_0}{(d + 2) b^{d+1} + (d+1)b^d + d a_d b^{d - 1} + \cdots + 2a_2 b + a_1}.$$
Then $h$ is $\uptau$-continuous and $h(0,\ldots,0,-1,0,\ldots,0) = 0$.
Hence there are $\uptau$-neighborhoods $Q,O$ of $-1,0$, respectively, such that $h(a_0,\ldots,a_d,b, a'_0,\ldots,a'_d)$ is in $W$ when each $a_i,a'_i$ is in $O$ and $b \in Q$.
After replacing $U$ with $U \cap Q$ we suppose that $U \subseteq Q$.
After possibly shrinking $O$ we may suppose by gt-henselianity of $\uptau$ that $p_\alpha$ has a simple root in $U$ when $\alpha_0,\ldots,\alpha_d \in O$.
Hence if $\alpha_0,\ldots,\alpha_d,\der_1(\alpha_0),\ldots,\der_1(\alpha_d),\ldots,\der_n(\alpha_0),\ldots,\der_n(\alpha_d)\in O$ then there is a simple root $\beta$ of $p_\alpha$ in $U$ such that $\der_i(\beta) \in V$ for $i = 1,\ldots,n$.
\end{proof}

We now recall a basic linear-algebraic fact which we leave to the reader.

\begin{fact}\label{fact:lin alg}
Suppose that $f_1,\ldots,f_n$ are $K$-linearly independent functions $X \to K$ for some set $X$.
Then there are $a_1,\ldots,a_n \in X$ such that the vectors $(f_1(a_i),\ldots,f_n(a_i)) \in K^n$ are $K$-linearly independent for $i = 1,\ldots,n$.
\end{fact}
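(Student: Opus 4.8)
The plan is to induct on $n$. The base case $n=1$ is immediate: linear independence of the single function $f_1$ says only that $f_1$ is not identically zero, so one may pick $a_1 \in X$ with $f_1(a_1) \neq 0$, and the single vector $(f_1(a_1))$ is linearly independent in $K$.

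For the inductive step, I would assume the result for $n-1$. Since $f_1,\ldots,f_{n-1}$ are again $K$-linearly independent, the inductive hypothesis yields $a_1,\ldots,a_{n-1} \in X$ such that the vectors $(f_1(a_i),\ldots,f_{n-1}(a_i)) \in K^{n-1}$ are linearly independent for $i = 1,\ldots,n-1$; equivalently the matrix $(f_j(a_i))_{1 \le i,j \le n-1}$ has nonzero determinant. I would then define $g \colon X \to K$ by letting $g(x)$ be the determinant of the $n \times n$ matrix whose first $n-1$ rows are $(f_1(a_i),\ldots,f_n(a_i))$ for $i = 1,\ldots,n-1$ and whose last row is $(f_1(x),\ldots,f_n(x))$.

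Expanding this determinant along its last row exhibits $g$ as a $K$-linear combination $g = \sum_{j=1}^{n} c_j f_j$, where each $c_j$ is, up to sign, a minor of the fixed top block. The crucial point is that the coefficient $c_n$ of $f_n$ is exactly the determinant of $(f_j(a_i))_{1 \le i,j \le n-1}$, which is nonzero by the previous paragraph. Hence $g$ is a \emph{nontrivial} $K$-linear combination of $f_1,\ldots,f_n$, so by their linear independence $g$ is not identically zero; I would then choose $a_n \in X$ with $g(a_n) \neq 0$. Since $\det (f_j(a_i))_{1 \le i,j \le n} = g(a_n) \neq 0$, the vectors $(f_1(a_i),\ldots,f_n(a_i))$ for $i = 1,\ldots,n$ are linearly independent, completing the induction.

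There is no real obstacle here: the argument is a routine induction, and the determinant device is precisely what converts ``linear independence as functions'' into ``linear independence of evaluation vectors.'' The only point demanding care is the bookkeeping in the cofactor expansion, namely verifying that the coefficient of $f_n$ is exactly the $(n-1)\times(n-1)$ minor furnished by the inductive hypothesis; this is what licenses the appeal to linear independence in order to produce the new point $a_n$.
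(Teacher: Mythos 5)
Your argument is correct. The paper itself gives no proof of this fact (it is explicitly "left to the reader"), so there is nothing to compare against; your induction via the cofactor expansion of $\det(f_j(a_i))$ along the last row, using that the coefficient of $f_n$ is the nonzero $(n-1)\times(n-1)$ minor supplied by the inductive hypothesis, is the standard argument and fills the gap cleanly.
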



Derivations $\der_1\ldots,\der_n\colon K \to K$ are linearly independent if they are linearly independent elements of the $K$-vector space of functions $K \to K$.
The \textbf{constant subfield} $\mathrm{Cons}(\der)$ of a derivation $\der\colon K \to K$ is the set of $a \in K$ such that $\der(a) = 0$.
Note that $\der$ is $\mathrm{Cons}(\der)$-linear.

\begin{lemma}\label{lem:two der}
Let $\der_1,\ldots, \der_n$ be linearly independent derivations $K \to K$ with $F = \bigcap_{i = 1}^{n} \mathrm{Cons}(\der_i)$  $\uptau$-dense in $K$.
Then $\{(a, \der_1(a),\ldots, \der_n(a)) : a \in K \}$ is a $\uptau$-dense subset of $K^{n + 1}$.
\end{lemma}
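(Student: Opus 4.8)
The plan is to show directly that the image of the map $\sigma(a) = (a, \der_1(a), \ldots, \der_n(a))$ is $\uptau$-dense in $K^{n+1}$, i.e.\ that for every basic open set $U_0 \times U_1 \times \cdots \times U_n$ with each $U_i$ a nonempty $\uptau$-open subset of $K$ there is $a \in K$ with $a \in U_0$ and $\der_i(a) \in U_i$ for $i = 1, \ldots, n$. The engine is Fact~\ref{fact:lin alg}: applying it to the $K$-linearly independent functions $\der_1, \ldots, \der_n \colon K \to K$ produces $b_1, \ldots, b_n \in K$ such that the vectors $(\der_1(b_j), \ldots, \der_n(b_j))$ are $K$-linearly independent for $j = 1, \ldots, n$. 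Thus the matrix $M = (\der_i(b_j))_{i,j}$ is invertible over $K$.

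The reason for working with the constant field is that elements of $F$ are annihilated by each $\der_i$. Indeed, if $\lambda_1, \ldots, \lambda_n \in F$ then $\der_i(\lambda_j) = 0$, so for $a = a_0 + \sum_{j=1}^{n} \lambda_j b_j$ with $a_0 \in F$ we obtain $\der_i(a) = \sum_{j=1}^{n} \lambda_j \der_i(b_j)$, which is precisely the $i$-th coordinate of $M\lambda$ for $\lambda = (\lambda_1, \ldots, \lambda_n)$. The derivative coordinates therefore depend only on $\lambda$, and only through the fixed invertible $K$-linear map $M$, whereas the extra summand $a_0 \in F$ affects the zeroth coordinate alone.

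I would then hit the two groups of coordinates in turn. First, the map $\lambda \mapsto M\lambda$ is $\uptau$-continuous (built from the field operations with fixed coefficients) and surjective onto $K^n$ since $M$ is invertible, so its preimage of the nonempty open set $U_1 \times \cdots \times U_n$ is a nonempty $\uptau$-open subset of $K^n$. As $F$ is $\uptau$-dense in $K$, the product $F^n$ is $\uptau$-dense in $K^n$, so this preimage contains some $\lambda \in F^n$; fix it and set $s = \sum_j \lambda_j b_j$, so that $\der_i(s) \in U_i$ for every $i$. Second, for this fixed $s$ the translate $U_0 - s$ is a nonempty $\uptau$-open set, so by $\uptau$-density of $F$ there is $a_0 \in F \cap (U_0 - s)$. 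Setting $a = a_0 + s$ we get $a \in U_0$, while $\der_i(a) = \der_i(s) \in U_i$ because $\der_i(a_0) = 0$. Hence $\sigma(a)$ lies in the prescribed box.

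The only point requiring care — and the conceptual heart of the argument — is the decoupling: the zeroth coordinate must be adjustable without disturbing the derivative coordinates that have already been placed. This is exactly what forces the free parameter $a_0$ to be taken from the constant field $F$, and what makes the $\uptau$-density of $F$ the essential hypothesis; invertibility of $M$ via Fact~\ref{fact:lin alg} is in turn what allows the derivative coordinates to be steered onto an arbitrary open box. Since at every stage we work with genuine nonempty open sets and invoke density, no approximation is lost.
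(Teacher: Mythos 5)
Your proof is correct and follows essentially the same route as the paper's: both invoke Fact~\ref{fact:lin alg} to get $b_1,\ldots,b_n$ with $(\der_i(b_j))$ invertible, write candidate elements as $a_0+\sum_j\lambda_j b_j$ with coefficients in the dense constant field $F$, and use invertibility of the resulting linear map together with density of $F$. The only difference is presentational — you steer the derivative coordinates and the zeroth coordinate in two separate steps, while the paper packages everything into a single invertible linear transformation $T$ of $K^{n+1}$ and notes $T(F^{n+1})$ is dense.
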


\begin{proof}
By Fact~\ref{fact:lin alg} there are $t_1,\ldots,t_n \in K$ so that the vectors $(\der_1(t_i),\ldots, \der_n(t_i))$ are $K$-linearly independent for $i = 1,\ldots,n$.
Given $\alpha = a_0 + a_1 t_1 + \cdots + a_n t_n$ with $a_0,\ldots,a_n\in F$ we have
\[
\begin{pmatrix}
\alpha \\ \der_1(\alpha) \\ \vdots \\ \der_n(\alpha)
\end{pmatrix}
=
\begin{pmatrix}
a_0 + a_1 t_1 +\cdots + a_n t_n \\ a_1 \der_1(t_1) +\cdots + a_n \der_1(t_n) \\ \vdots \\ a_1 \der_n(t_1) +\cdots + a_n \der_n(t_n)
\end{pmatrix}
\]
We let $T$ be the $K$-linear transformation $K^{n +1} \to K^{n + 1}$ given as follows:
\[
T\begin{pmatrix} x_0 \\ \vdots \\ x_n \end{pmatrix} = \begin{pmatrix}
x_0 + x_1 t_1 +\cdots + x_n t_n \\ x_1 \der_1(t_1) +\cdots + x_n \der_1(t_n) \\ \vdots \\ x_1 \der_n(t_1) +\cdots + x_n \der_n(t_n)
\end{pmatrix}
\]
Note that $T(F^{n+1}) \subseteq \{(a, \der_1(a),\ldots, \der_n(a)) : a \in K \}$, so it is enough to show that $T(F^{n+1})$ is $\uptau$-dense in $K^{n+1}$.
As $F^{n+1}$ is $\uptau$-dense in $K^{n+1}$ and $T$ is linear it is sufficient to note that $T$ is invertible.
This follows as
\[
\det(T) = \det 
\begin{pmatrix}
1 & t_1 & \cdots & t_n \\
0 & \der_1(t_1) & \cdots & \der_n(t_1) \\
\vdots &\vdots &\vdots &\vdots \\
0 & \der_n(t_n) & \cdots & \der_n(t_n) 
\end{pmatrix}
= \det
\begin{pmatrix}
\der_1(t_1) & \cdots & \der_n(t_1) \\
\vdots  &\vdots &\vdots \\
\der_n(t_n) & \cdots & \der_n(t_n) \end{pmatrix}
\ne 0. \qedhere
\]
\end{proof}

\begin{lemma}\label{lem:ind-family}
Suppose that $I$ is $K$-linearly independent and $F = \bigcap_{\der \in I} \mathrm{Cons}(\der)$ is dense.
Let $I_0,I_1$ be subsets of $I$ and let $\uptau_i = \uptau_{I_i}$ for $1 = 1,2$.
Then $\uptau_2$ refines $\uptau_1$ if and only if $I_1 \subseteq I_2$.
Hence $\uptau_1,\uptau_2$ are incomparable if and only if $I_1,I_2$ are incomparable under containment.
\end{lemma}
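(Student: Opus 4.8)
The plan is to prove the refinement equivalence, from which the incomparability statement is immediate: $\uptau_1$ and $\uptau_2$ are incomparable exactly when neither refines the other, and by the equivalence this says that neither $I_1 \subseteq I_2$ nor $I_2 \subseteq I_1$ holds, i.e.\ $I_1$ and $I_2$ are incomparable under containment. The easy direction is that $I_1 \subseteq I_2$ forces $\uptau_2$ to refine $\uptau_1$, and I would read this straight off the basis: a basic $\uptau_1$-neighborhood has the form $U \cap \der_1^{-1}(V_1) \cap \cdots \cap \der_k^{-1}(V_k)$ with $\der_1,\ldots,\der_k \in I_1 \subseteq I_2$, so the very same set is basic $\uptau_2$-open; hence every $\uptau_1$-open set is $\uptau_2$-open.

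For the converse I would argue the contrapositive: assuming some $\der_0 \in I_1 \setminus I_2$, I exhibit a $\uptau_1$-open set that is not $\uptau_2$-open. First fix a nonzero $c \in K$ and, using that $\uptau$ is Hausdorff, choose disjoint $\uptau$-open sets $V \ni 0$ and $W \ni c$. Since $\der_0(0) = 0 \in V$, the set $\der_0^{-1}(V)$ is a $\uptau_1$-open neighborhood of $0$. Suppose toward a contradiction that it is also $\uptau_2$-open. Then it contains a basic $\uptau_2$-neighborhood $B = U' \cap \der_1^{-1}(V_1) \cap \cdots \cap \der_m^{-1}(V_m)$ of $0$, where $\der_1,\ldots,\der_m \in I_2$ are distinct and $U',V_1,\ldots,V_m$ are $\uptau$-open neighborhoods of $0$, with $B \subseteq \der_0^{-1}(V)$.

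The heart of the argument is a density computation. Because $\der_0 \in I_1 \setminus I_2$ while each $\der_i$ lies in $I_2$, the derivations $\der_0,\der_1,\ldots,\der_m$ are pairwise distinct elements of $I$, hence $K$-linearly independent; moreover $\bigcap_{i=0}^{m}\mathrm{Cons}(\der_i)$ contains $F$ and is therefore $\uptau$-dense. Lemma~\ref{lem:two der} then gives that $\{(a,\der_0(a),\der_1(a),\ldots,\der_m(a)) : a \in K\}$ is $\uptau$-dense in $K^{m+2}$. Applying this to the nonempty $\uptau$-open box $U' \times W \times V_1 \times \cdots \times V_m$ produces an $a \in K$ with $a \in U'$, $\der_i(a) \in V_i$ for every $i$, and $\der_0(a) \in W$. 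Such an $a$ lies in $B$, yet $\der_0(a) \in W$ is disjoint from $V$, so $a \notin \der_0^{-1}(V)$, contradicting $B \subseteq \der_0^{-1}(V)$.

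The step I expect to require the most care is this last one, namely turning the requirement ``$\der_0(a) \notin V$'' into a genuinely open condition that density can meet. The idea that makes it work is that one is free to choose $V$, and choosing it (by Hausdorffness) disjoint from a fixed nonempty open set $W$ lets the failure of $\der_0(a) \in V$ be witnessed by the open condition $\der_0(a) \in W$. The surrounding bookkeeping---distinctness and linear independence of $\der_0,\der_1,\ldots,\der_m$, and density of the finite intersection of their constant fields---is routine from the standing hypotheses on $I$, and the invocation of Lemma~\ref{lem:two der} is then immediate.
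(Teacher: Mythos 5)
Your proof is correct and follows essentially the same route as the paper: the easy direction read off the basis, and for the converse, fixing $\der_0 \in I_1 \setminus I_2$, choosing two disjoint nonempty $\uptau$-open sets, and invoking Lemma~\ref{lem:two der} to produce a point of any basic $\uptau_2$-neighborhood whose $\der_0$-image lands in the open set disjoint from $V$. The only cosmetic difference is that you phrase it as a contradiction about a neighborhood of $0$ (which, as a bonus, makes it automatic that $\der_0^{-1}(V)$ is nonempty), whereas the paper directly shows no basic $\uptau_2$-open set is contained in $\der_0^{-1}(U)$.
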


\begin{proof}
It is clear from the definitions that $\uptau_2$ refines $\uptau_1$ when $I_1 \subseteq I_2$.
Suppose that $\der \in I_1 \setminus I_2$.
Let $U,U^*$ be disjoint nonempty $\uptau$-open subsets of $K$ and let $P = \der^{-1}(U)$.
We show that $P$ is not $\uptau_2$-open.
It suffices to fix distinct $\der_1,\ldots,\der_n \in I_2$ and $\uptau$-open subsets $V,V_1,\ldots,V_n$ of $K$ and show that $V \cap \der^{-1}_1(V_1)\cap\cdots\cap\der^{-1}_n(V_n)$ is not contained in $P$.
By Lemma~\ref{lem:two der} there is $a \in K$ such that $(a,\der_1(a),\ldots,\der_n(a),\der(a)) \in V \times V_1 \times \cdots\times V_n \times U^*$.
Hence $a \in V \cap \der^{-1}_1(V_1)\cap\cdots\cap\der^{-1}_n(V_n)$ and $a \notin P$ as $\der(a) \notin U$.
\end{proof}

\begin{corollary}\label{cor:loc bound}
Suppose that $I$ is $K$-linearly independent and $F = \bigcap_{\der \in I} \mathrm{Cons}(\der)$ is dense.
Then $\uptau_I$ is locally bounded if and only if $I$ is finite.
\end{corollary}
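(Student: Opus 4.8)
The backward direction is immediate from Lemma~\ref{lem:loc bounded}: if $I$ is finite then $\uptau_I$ is locally bounded. So the content is the forward direction, which I would prove in contrapositive form: if $I$ is infinite then $\uptau_I$ is not locally bounded. Suppose toward a contradiction that $I$ is infinite and $\uptau_I$ is locally bounded. Then some nonempty $\uptau_I$-open set is bounded, and translating it (bounded sets are closed under affine transformations) I obtain a bounded $\uptau_I$-open neighborhood of $0$. Since the sets $U \cap \der_1^{-1}(V_1) \cap \cdots \cap \der_n^{-1}(V_n)$ with $U,V_1,\ldots,V_n$ $\uptau$-open and $\der_1,\ldots,\der_n \in I$ form a neighborhood basis of $\uptau_I$ at $0$, and since subsets of bounded sets are bounded, I may replace it by a bounded basic neighborhood
\[
O = U \cap \der_1^{-1}(V_1) \cap \cdots \cap \der_n^{-1}(V_n) \ni 0.
\]
As $I$ is infinite I fix some $\der \in I \setminus \{\der_1,\ldots,\der_n\}$, and the plan is to show that $O$ is in fact not bounded.

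Using Hausdorffness I first fix a $\uptau$-open neighborhood $V$ of $0$ with $\overline V \ne K$: separating $0$ from some $c \ne 0$ by disjoint opens $V \ni 0$ and $V' \ni c$, the open set $V'$ witnesses $c \notin \overline V$. Set $W = \der^{-1}(V)$, a $\uptau_I$-neighborhood of $0$. I claim $\lambda O \not\subseteq W$ for every $\lambda \in K^\times$, which contradicts boundedness of $O$. Fix $\lambda \in K^\times$. Since $\der(\lambda a) = \lambda\der(a) + a\der(\lambda)$, it suffices to find $a \in O$ with $\lambda\der(a) + a\der(\lambda) \notin V$. The derivations $\der_1,\ldots,\der_n,\der$ form a finite subfamily of $I$, hence are $K$-linearly independent, and $\bigcap_{i=1}^n\mathrm{Cons}(\der_i) \cap \mathrm{Cons}(\der) \supseteq F$ is dense; so by Lemma~\ref{lem:two der} the set $\{(a,\der_1(a),\ldots,\der_n(a),\der(a)) : a \in K\}$ is $\uptau$-dense in $K^{n+2}$. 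It therefore suffices to check that
\[
S = \{(x_0,\ldots,x_n,y) \in K^{n+2} : x_0 \in U,\ x_i \in V_i \ (1\le i \le n),\ \lambda y + \der(\lambda)x_0 \notin \overline V\}
\]
is a nonempty $\uptau$-open subset of $K^{n+2}$: density then provides $a$ with $(a,\der_1(a),\ldots,\der_n(a),\der(a)) \in S$, and this $a$ lies in $O$ and satisfies $\lambda\der(a) + a\der(\lambda) \notin \overline V$, hence $\notin V$, as desired. Openness of $S$ holds because $(x_0,y) \mapsto \lambda y + \der(\lambda)x_0$ is $\uptau$-continuous and $K \setminus \overline V$ is open; for nonemptiness I pick $x_i \in V_i$ and $x_0 \in U$ arbitrarily and then solve for $y \notin \lambda^{-1}(\overline V - \der(\lambda)x_0)$, which is possible precisely because $\overline V \ne K$.

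The backward direction is routine, so the main work is the forward direction, and its one delicate point is engineering a test neighborhood to which Lemma~\ref{lem:two der} applies. Concretely, I must choose $V$ with $\overline V \ne K$ so that the affine constraint $\lambda y + \der(\lambda)x_0 \notin \overline V$ carves out a nonempty open set for every $\lambda$, and I rely on the fact that membership in $O$ constrains only $a,\der_1(a),\ldots,\der_n(a)$ while leaving $\der(a)$ entirely free; this freedom is exactly the linear independence of $\der$ from $\der_1,\ldots,\der_n$ combined with the density supplied by Lemma~\ref{lem:two der}. A pleasant side effect is that the point the argument produces has $a \ne 0$ automatically, since $0$ fails the constraint defining $S$, so no separate non-discreteness bookkeeping is needed.
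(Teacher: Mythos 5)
Your proof is correct, and it takes a genuinely different route from the paper's for the forward direction (the backward direction is handled identically, via Lemma~\ref{lem:loc bounded}). The paper argues by contradiction at the level of topologies: after replacing the bounded neighborhood by a basic one, it is $\uptau_J$-open for a finite $J \subseteq I$, and since a nonempty bounded open set $U$ yields the basis $\{aU + b : a \in K^\times, b \in K\}$ for any field topology in which it is open and bounded, local boundedness forces $\uptau_I = \uptau_J$; this contradicts Lemma~\ref{lem:ind-family}, which says $\uptau_J$ cannot refine $\uptau_I$ when $I \not\subseteq J$. You instead refute boundedness of the basic neighborhood $O$ head-on: you exhibit the single $\uptau_I$-neighborhood $W = \der^{-1}(V)$ and show no scalar multiple $\lambda O$ fits inside it, invoking Lemma~\ref{lem:two der} directly rather than through Lemma~\ref{lem:ind-family}. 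The two arguments run on the same engine --- the density of $\{(a,\der_1(a),\ldots,\der_n(a),\der(a)) : a \in K\}$ from Lemma~\ref{lem:two der}, applied to a derivation $\der$ outside the finite set --- but your version must pay for the Leibniz twist $\der(\lambda a) = \lambda\der(a) + a\der(\lambda)$, which is why you need the Hausdorff separation step producing $V$ with $\overline{V} \ne K$ and the affine constraint $\lambda y + \der(\lambda)x_0 \notin \overline{V}$; the paper sidesteps all of this because affine images of open sets are open in any field topology, so the basis fact absorbs the scaling automatically. What your approach buys is a more self-contained and quantitatively explicit statement (every basic $\uptau_I$-neighborhood built from finitely many derivations is unbounded, with an explicit witnessing neighborhood), at the cost of extra bookkeeping; the paper's is shorter because it reuses Lemma~\ref{lem:ind-family} and the locally-bounded basis characterization already on hand.
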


This gives the first example of a gt-henselian topology that is not locally bounded.
For example if $I = (\der_i)_{i \in \N}$ is an $\R$-linearly independent sequence of derivations $\R \to \R$ then $\uptau_I$ is a non-locally bounded gt-henselian topology on $\R$.

\begin{proof}
The right to left direction follows by Lemma~\ref{lem:loc bounded}.
Suppose that $I$ is infinite and $\uptau_I$ is locally bounded.
Let $U$ be a bounded open neighborhood of $0$.
Then there is a $\uptau$-open neighborhood $V$ of $0$ and $\der_1,\ldots,\der_n \in I$ such that $V \cap \der^{-1}_1(V) \cap \cdots \cap \der^{-1}_n(V)$ is contained in $U$.
So we may suppose that $U = V \cap \der^{-1}_1(V) \cap \cdots \cap \der^{-1}_n(V)$.
Let $J = \{\der_1,\ldots,\der_n\}$, so $U$ is $\uptau_J$-open.
Now $\{aU + b : a \in K^\times, b \in K\}$ is a basis for both $\uptau_I$ and $\uptau_J$, so $\uptau_I = \uptau_J$.
This is a contradiction by Lemma~\ref{lem:ind-family}.
\end{proof}

\begin{lemma}\label{lem:gt wt}
If there is a gt-henselian topology on $K$ then there is a gt-henselian topology with weight at most $|K|$.
\end{lemma}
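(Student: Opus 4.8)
The plan is to extract from the given gt-henselian topology a neighbourhood basis at $0$ of cardinality at most $|K|$ which is already closed under enough operations to generate a gt-henselian field topology on its own. Let $\uptau$ be a gt-henselian topology on $K$. I would build a family $\mathcal{B}$ of $\uptau$-open neighbourhoods of $0$ with $|\mathcal{B}| \le |K|$ that is a neighbourhood basis at $0$ for a (necessarily coarser) gt-henselian topology $\uptau'$. Recall that a downward-directed filter basis $\mathcal{B}$ of subsets of $K$, each containing $0$, is a neighbourhood basis at $0$ for a Hausdorff field topology precisely when it satisfies a short list of "$\forall U \in \mathcal{B}\ \exists V \in \mathcal{B}$" conditions: for every $U \in \mathcal{B}$ there are members of $\mathcal{B}$ witnessing $V - V \subseteq U$, $VV \subseteq U$, and continuity of inversion near $1$; for every $U \in \mathcal{B}$ and every $a \in K$ there is $V \in \mathcal{B}$ with $aV \subseteq U$; and $\bigcap \mathcal{B} = \{0\}$. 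To additionally force gt-henselianity it suffices, for every $U \in \mathcal{B}$ and every $d \in \N$, to have a member $V \in \mathcal{B}$ such that $x^{d+2} + x^{d+1} + a_d x^d + \cdots + a_0$ has a simple root in $-1 + U$ whenever $a_0,\ldots,a_d \in V$; this handles an arbitrary $\uptau'$-neighbourhood of $-1$, since such a neighbourhood contains $-1 + U$ for some $U \in \mathcal{B}$.

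Each of these is a requirement of the form ``for each finite tuple of data there exists a $\uptau$-neighbourhood of $0$ satisfying \dots'', and each is satisfiable because $\uptau$ is itself a gt-henselian field topology. I would therefore build $\mathcal{B}$ by a closure argument of L\"owenheim--Skolem type. Start with $\mathcal{B}_0$ containing, for each $x \in K \setminus \{0\}$, a $\uptau$-open neighbourhood $W_x$ of $0$ with $x \notin W_x$ (possible as $\uptau$ is Hausdorff); then $\bigcap \mathcal{B}_0 = \{0\}$ and $|\mathcal{B}_0| \le |K|$. Given $\mathcal{B}_n$, let $\mathcal{B}_{n+1}$ be $\mathcal{B}_n$ together with a choice of witnessing $\uptau$-neighbourhood of $0$ for every requirement generated by the members, and pairs of members, of $\mathcal{B}_n$, including the intersections $U_1 \cap U_2$ needed for downward directedness. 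Finally set $\mathcal{B} = \bigcup_{n < \omega} \mathcal{B}_n$.

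Now count. Each $U \in \mathcal{B}_n$ generates one additive, one multiplicative, and one inversion witness, countably many gt-henselian witnesses (one per $d \in \N$), and $|K|$ scalar-multiplication witnesses (one per $a \in K$); each pair generates one intersection witness. Hence $|\mathcal{B}_{n+1}| \le |\mathcal{B}_n| \cdot (|K| + \aleph_0) + |\mathcal{B}_n|^2 = |\mathcal{B}_n| \cdot |K|$, so by induction $|\mathcal{B}_n| \le |K|$ for all $n$ and $|\mathcal{B}| \le \aleph_0 \cdot |K| = |K|$ (using that $K$ is infinite). At the limit $\mathcal{B}$ satisfies every closure condition, so it is a neighbourhood basis at $0$ for a field topology $\uptau'$; since $\bigcap \mathcal{B} = \{0\}$ this topology is Hausdorff, and since no member of $\mathcal{B}$ equals $\{0\}$ (each is a $\uptau$-neighbourhood of $0$ and $\uptau$ is non-discrete) $\uptau'$ is non-discrete. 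The gt-henselian witnesses show $\uptau'$ is gt-henselian, and $w(\uptau') \le |K| + |\mathcal{B}| = |K|$.

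The main thing to get right is the precise list of neighbourhood-filter conditions characterising field topologies, together with the verification that they, plus the root condition, are exactly what is needed and are all of ``$\forall\,\exists$-neighbourhood'' shape; once that is pinned down (via the standard characterisation of ring and field topologies by filters of neighbourhoods of $0$) the argument is simply a closure of length $\omega$. A secondary point to check is that passing to the coarser topology genuinely preserves non-discreteness and that ``simple root in $-1 + U$'' is a purely algebraic condition, unaffected by the coarsening, so that each witness chosen inside $\uptau$ continues to work for $\uptau'$. Note that the argument uses nothing beyond $\uptau$ being a gt-henselian field topology and $|K|$ being infinite.
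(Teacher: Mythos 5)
Your proof is correct, and it is exactly the ``easy closure argument'' that the paper mentions but declines to write out. The paper's own route: fix a neighborhood basis $\mathcal{E}$ at $0$ for the given gt-henselian topology, form the two-sorted structure $(K,\mathcal{E})$ with the field structure on $K$ and $\in$ as a relation between the sorts, and apply downward L\"owenheim--Skolem to obtain $\mathcal{E}^* \subseteq \mathcal{E}$ with $|\mathcal{E}^*| \le |K|$ and $(K,\mathcal{E}^*)$ an elementary substructure of $(K,\mathcal{E})$; since the filter axioms plus the simple-root condition characterizing neighborhood bases of gt-henselian topologies (essentially the same list you use, quoted from Johnson's paper) are expressible by first-order sentences in this structure, $\mathcal{E}^*$ still satisfies them and is therefore a basis at $0$ for a gt-henselian topology of weight at most $|K|$. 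The substance is identical --- the paper itself remarks that the needed closure argument is a special case of the proof of L\"owenheim--Skolem --- but the executions differ: your version is elementary and self-contained, building a fresh basis in $\omega$ stages with explicit witnesses and cardinality bookkeeping, and enforcing Hausdorffness by seeding stage $0$ with $|K|$-many separating neighborhoods; the paper's version is shorter, extracts a subfamily of the original basis rather than constructing a new one, and hides all bookkeeping inside L\"owenheim--Skolem, at the cost of the (easy but unstated) check that the relevant conditions are first-order so that they transfer to the elementary substructure. One cosmetic divergence: the characterization the paper cites secures Hausdorffness from ``some basis element omits $1$'' together with the scalar-multiplication axiom, whereas you impose $\bigcap \mathcal{B} = \{0\}$ directly; both are adequate, and your verification that non-discreteness and the root conditions survive the coarsening is sound.
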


One can prove this via an easy closure argument.
The required closure argument can be seen as a special case of the proof of the \lowenheim theorem, so we just apply that.

\begin{proof}
Recall that a collection $\mathcal{B}$ of subsets of $K$ is a neighborhood basis at $0$ for a gt-henselian topology if and only if we have the following~\cite[Fact~2.1]{lagth}.
\begin{enumerate}[leftmargin=*]
\item Every element of $\mathcal{B}$ contains $0$, $\{0\} \notin \mathcal{B}$, and some element of $\mathcal{B}$ does not contain $1$.
\item For any $V,V^* \in \mathcal{B}$ there is $U \in \mathcal{B}$ such that $U \subseteq V \cap V^*$.
\item For any $V \in \mathcal{B}$ there is $U \in \mathcal{B}$ such that $U - U$, $UU\subseteq V$ and $(1 + U)^{-1}\subseteq(1 + V)^{-1}$.
\item For any $\lambda \in K^\times$ and $V \in \mathcal{B}$ we have $\lambda U \subseteq V$ for some $U \in \mathcal{B}$.
\item For any $d \in \N$ and $V \in \mathcal{B}$ there is $U \in \mathcal{B}$ such that $x^{d + 2} + x^{d + 1} + a_d x^d + \cdots + a_1 x + a_0$ has a simple root in $V - 1$ when $a_0,\ldots,a_d \in U$.
\end{enumerate}
Suppose that $\uptau$ is gt-henselian and fix a neighborhood basis $\mathcal{E}$ for $\uptau$ at $0$.
Consider the two-sorted structure $(K,\mathcal{E})$ with sorts $K$ and $\mathcal{E}$, the field structure on $K$, and $\in$ as a binary relation between $K$ and $\mathcal{E}$.
By the \lowenheim theorem there is a subset $\mathcal{E}^* \subseteq \mathcal{E}$ of cardinality at most $|K|$ such that $(K,\mathcal{E}^*)$ is an elementary substructure of $(K,\mathcal{E})$.
In particular $\mathcal{E}^*$ satisfies (1)-(5) above and hence $\mathcal{E}^*$ is a neighborhood basis at $0$ for a gt-henselian topology $\uptau^*$ on $K$.
Finally, $\uptau^*$ has weight at most $|K|$ as $|\mathcal{E}|^*\le |K|$.
\end{proof}

\begin{fact}\label{fact:top-card}
If $\uptau$ is not discrete, $U$ is a nonempty $\uptau$-open subset of $K$, and $F$ is a subfield of $K$ over which $K$ has transcendence degree at least two, then there are $a,b \in U$ which are algebraically independent over $F$.
\end{fact}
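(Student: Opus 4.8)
The plan is to argue by contradiction, reducing everything to one observation about field topologies: no nonempty $\uptau$-open set can be contained in a proper subfield of $K$. I would first establish this as a claim. Suppose $W$ is a nonempty $\uptau$-open set with $W \subseteq M$ for a subfield $M \subseteq K$. Fixing $c \in W$, the set $W - c$ is a $\uptau$-neighborhood of $0$ contained in $M$ (as $c \in M$ and $M$ is closed under subtraction). For any $x \in K^\times$, continuity of multiplication makes $\{y : xy \in W - c\}$ a $\uptau$-neighborhood of $0$, so its intersection with $W - c$ is again a $\uptau$-neighborhood of $0$ and, since $\uptau$ is not discrete, contains some $w \ne 0$. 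Then $w \in M$ and $xw \in W - c \subseteq M$, so $x = (xw)w^{-1} \in M$; as $x$ was arbitrary, $M = K$.

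Granting the claim, let $L$ denote the subfield of elements of $K$ algebraic over $F$. If $U \subseteq L$ then the claim would force $L = K$ and hence $\operatorname{trdeg}_F K = 0$, contrary to hypothesis; so I obtain some $a \in U$ transcendental over $F$. Let $M$ be the subfield of elements of $K$ algebraic over $F(a)$, so that $\operatorname{trdeg}_F M = 1$ and therefore $M \ne K$ because $\operatorname{trdeg}_F K \ge 2$. I then suppose toward a contradiction that every $b \in U$ is algebraically dependent with $a$ over $F$. Since $a$ is transcendental over $F$, such dependence forces each $b$ to be algebraic over $F(a)$, i.e. $b \in M$, whence $U \subseteq M$; the claim then yields $M = K$, a contradiction. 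Hence some $b \in U$ is algebraically independent from $a$ over $F$, giving the desired pair $a, b \in U$.

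I expect the only real content to be the opening claim; the transcendence-degree bookkeeping afterward is routine. The claim is exactly where non-discreteness and the field-topology axioms are used, through the continuity of $y \mapsto xy$ together with the fact that in a non-discrete field topology every neighborhood of $0$ contains a nonzero element (otherwise $\{0\}$ would be open, and translation-invariance would make $\uptau$ discrete). Note that characteristic zero plays no role, so the statement and this argument are valid in arbitrary characteristic.
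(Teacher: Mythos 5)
Your proof is correct, but there is nothing in the paper to match it against step by step: the paper gives no argument for this fact at all, instead quoting it as a special case of \cite[Prop.~C]{topological_proofs}. So your route is a genuinely different (self-contained) one. Its real content is exactly the claim you isolate: a nonempty $\uptau$-open set $W$ cannot lie in a proper subfield $M$ of $K$. Your verification of that claim is sound --- translate to a neighborhood $W-c$ of $0$ inside $M$, use that $y \mapsto xy$ is a homeomorphism for fixed $x \in K^\times$, and use that non-discreteness forces every neighborhood of $0$ to contain a nonzero element, producing $w \neq 0$ with $w, xw \in M$ and hence $x \in M$. The subsequent bookkeeping is also correct, including the small but necessary observation that, because $a$ is transcendental over $F$, any $b \in U$ algebraically dependent with $a$ over $F$ must be algebraic over $F(a)$ (this also handles $b = a$ harmlessly), so that the relative algebraic closures of $F$ and of $F(a)$ in $K$ are the two proper subfields to which the claim is applied. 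What your approach buys: it makes the paper self-contained at this point and uses only weak hypotheses --- continuity of addition and of multiplication by fixed scalars, non-discreteness, and no characteristic assumption --- whereas the paper's approach outsources the work to an external proposition that is presumably more general. The trade-off is only length: the citation is one line, your argument is a page, but it is elementary and reusable (the claim that nonempty open sets generate $K$ as a field is a handy lemma in its own right).
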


Fact~\ref{fact:top-card} is a special case of \cite[Prop.~C]{topological_proofs} and has probably been proven in other places.

\begin{lemma}\label{lem:dense}
Suppose that $K$ has infinite transcendence degree and $\uptau$ has weight $\le \kappa = |K|$.
Then there are disjoint dense subsets $B,B^*$ of $K$ such that $B \cup B^*$ is algebraically independent and $|B| = \kappa = |B^*|$
\end{lemma}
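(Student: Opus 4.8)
The plan is to build $B$ and $B^*$ simultaneously by transfinite recursion of length $\kappa$, at each stage processing one basic $\uptau$-open set and using Fact~\ref{fact:top-card} to extract two fresh algebraically independent points, one assigned to $B$ and one to $B^*$. Throughout I use that $\uptau$ is non-discrete, which is needed to invoke Fact~\ref{fact:top-card}; note that the statement genuinely requires non-discreteness, since under the discrete topology the only dense subset of $K$ is $K$ itself, which is never algebraically independent.

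First I would fix a basis of $\uptau$ of cardinality $\le \kappa$ and enumerate its nonempty members as $(U_i)_{i<\kappa}$, repeating entries if necessary so that the list has length exactly $\kappa$. Since $K$ has characteristic zero and infinite transcendence degree over its prime field $\Q$, and $\kappa = |K|$, we have $\operatorname{trdeg}(K/\Q) = \kappa$. I would maintain at stage $i$ sets $B_i = \{a_j : j < i\}$ and $B^*_i = \{b_j : j < i\}$ with $B_i \cup B^*_i$ algebraically independent, taking unions at limit stages. Given stage $i$, let $F_i = \Q(B_i \cup B^*_i)$. As $|B_i \cup B^*_i| < \kappa$ we have $\operatorname{trdeg}(F_i/\Q) < \kappa$, so additivity of transcendence degree forces $\operatorname{trdeg}(K/F_i) = \kappa \ge 2$. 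Hence Fact~\ref{fact:top-card}, applied with $U = U_i$ and $F = F_i$, yields $a_i, b_i \in U_i$ algebraically independent over $F_i$. I set $B_{i+1} = B_i \cup \{a_i\}$ and $B^*_{i+1} = B^*_i \cup \{b_i\}$; since $a_i, b_i$ are transcendental over $F_i$ they are new and distinct, and $B_{i+1} \cup B^*_{i+1}$ stays algebraically independent. Finally I put $B = \bigcup_{i<\kappa} B_i$ and $B^* = \bigcup_{i<\kappa} B^*_i$.

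It then remains to check the four requirements, all of which fall out of the construction. All chosen points are pairwise distinct, so $B \cap B^* = \emptyset$ and $|B| = |B^*| = \kappa$, one new point being added to each at every one of the $\kappa$ stages. Any finite subset of $B \cup B^*$ lies in some $B_{i+1} \cup B^*_{i+1}$ and is therefore algebraically independent, so $B \cup B^*$ is algebraically independent. Density of each set holds because $a_i \in B \cap U_i$ and $b_i \in B^* \cap U_i$ for every $i$, and the $U_i$ range over a basis of $\uptau$. The one point demanding care is the cardinal-arithmetic bound $\operatorname{trdeg}(K/F_i) \ge 2$ at every stage; this is the crux of the argument and is exactly where the infinite-transcendence-degree hypothesis enters, but it is routine once one invokes additivity of transcendence degree together with the observation that fewer than $\kappa$ elements have been committed by stage $i$.
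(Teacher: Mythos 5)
Your proof is correct and is essentially the paper's own argument: a transfinite recursion of length $\kappa$ over an enumeration of a basis, invoking Fact~\ref{fact:top-card} at each stage over the (small) field generated by the points chosen so far. Your side remark that non-discreteness of $\uptau$ is genuinely needed (and is left implicit in the statement) is a fair and correct observation, but it does not change the argument, which is used in the paper only where $\uptau$ is gt-henselian and hence non-discrete.
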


\begin{proof}
Note that $K$ has transcendence degree $\kappa$.
We construct disjoint sequences $(b_i)_{i < \kappa}$, $(b^*_i)_{i < \kappa}$ of elements of $K$ such that the $b_i,b^*_i$ are algebraically independent.
Let $(U_i)_{i < \kappa}$ be a basis for $\uptau$.
Applying Fact~\ref{fact:top-card} let $b_0, b^*_0$ be elements of $U_0$ that are algebraically independent over $\Q$.
Suppose we have $(b_i)_{i <\lambda}$ and $(b^*_i)_{i < \lambda}$ for some $1\le \lambda < \kappa$.
Let $F$ be the algebraic closure of the field generated by the $b_i$ and $b^*_i$.
Then $F$ has transcendence degree $<\kappa$.
Again applying Fact~\ref{fact:top-card} take $b_{\lambda}, b^*_{\lambda}$ to be elements of $U_\lambda$  algebraically independent over $F$.
\end{proof}

\begin{fact}\label{fact:der}
Suppose that $F$ is a subfield of $K$, $B$ is a transcendence basis for $K$ over $F$, $F \to K$ is a derivation, and $B \to K$ is an arbitrary function.
Then there is a unique derivation $K \to K$ which extends both $F \to K$ and $B \to K$.
\end{fact}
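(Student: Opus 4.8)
The plan is to factor the extension through the tower $F \subseteq F(B) \subseteq K$, where $F(B)$ denotes the subfield of $K$ generated by $F \cup B$. Since $B$ is a transcendence basis for $K$ over $F$, its elements are algebraically independent over $F$, so $F(B) = \Frac(F[B])$ is purely transcendental over $F$, while $K$ is algebraic over $F(B)$. The idea is to build the derivation in two stages matching this tower: first over the transcendental part $F(B)$, where the values on $B$ may be prescribed freely, and then over the algebraic extension $K/F(B)$, where every value is forced. Uniqueness will be automatic at each stage because the Leibniz rule determines the extension completely, and global uniqueness follows by concatenating the stages.

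For the transcendental stage I would first extend the given derivation $d \colon F \to K$ and the prescribed function $g \colon B \to K$ to a derivation $D \colon F[B] \to K$ (viewing $K$ as an $F[B]$-module). This is the unique possibility: a derivation on the $F$-algebra $F[B]$ is determined on monomials by additivity and the product rule from its values on $F$ and on the generators $B$, and conversely these rules produce a well-defined derivation because $F[B]$ is a polynomial ring. I would then pass to $F(B) = \Frac(F[B])$ via the quotient rule $D(a/b) = (D(a)b - a D(b))/b^2$, which is again the only possible extension and is routinely checked to be independent of the representative $a/b$. This yields the unique derivation $F(B) \to K$ restricting to $d$ on $F$ and to $g$ on $B$.

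For the algebraic stage I would extend this derivation along $K/F(B)$. Because $K$ has characteristic zero, this extension is separable algebraic, so for any intermediate field $F(B) \subseteq L \subseteq K$ and any $\theta \in K$ the minimal polynomial $m$ of $\theta$ over $L$ satisfies $m'(\theta) \ne 0$; applying a derivation $D$ to the identity $m(\theta) = 0$ then forces $D(\theta) = -m^{D}(\theta)/m'(\theta)$, where $m^{D}$ is obtained by applying $D$ to the coefficients of $m$. This single formula gives both uniqueness and, on a simple extension $L(\theta)$, existence, since one verifies that the forced value is well defined and extends $D$ to a derivation. To cover all of $K$ — which may be an infinite algebraic extension of $F(B)$ — I would run a Zorn's lemma argument on the poset of pairs $(L, D_L)$ with $F(B) \subseteq L \subseteq K$ and $D_L \colon L \to K$ a derivation extending the one already built, ordered by extension; chains have unions as upper bounds, and a maximal $L$ must equal $K$ by the simple-extension step. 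The only genuine obstacle lies in this algebraic stage, and it dissolves entirely thanks to the standing hypothesis of characteristic zero: separability makes $m'(\theta)$ invertible, so the compatibility condition for extending a derivation can never fail. As this is a classical fact about extending derivations along separable algebraic extensions, one could alternatively simply cite it.
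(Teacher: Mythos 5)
Your proof is correct, but note that the paper does not actually prove this fact: it simply cites it as a basic algebraic fact (Cor.~1.9.4 of its reference on transcendence theory), so your two-stage argument is precisely the standard proof that such a reference contains, written out in full. The decomposition through $F \subseteq F(B) \subseteq K$ is the canonical one: on the purely transcendental part the values on $B$ are free parameters (well-definedness on $F[B]$ coming from algebraic independence, i.e., uniqueness of polynomial representations, followed by the quotient rule on the fraction field), while on the algebraic part every value is forced by $D(\theta) = -m^{D}(\theta)/m'(\theta)$, with separability guaranteeing $m'(\theta) \neq 0$ so that the forced value is well defined and the extension never obstructs; Zorn's lemma then handles the fact that $K/F(B)$ may be an infinite algebraic extension. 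Your appeal to characteristic zero is legitimate, since that is a standing hypothesis of the section in which the fact appears (in positive characteristic one would need $K$ separable over $F(B)$, which can fail, so the hypothesis is genuinely used). Your closing remark that one could alternatively just cite the classical fact is exactly what the author does; what your version buys is self-containedness, at the cost of the routine verifications (well-definedness on $F[B]$ and on simple extensions) that the citation avoids.
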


Fact~\ref{fact:der} is a basic algebraic fact, see \cite[Cor.~1.9.4]{trans}.

\begin{lemma}\label{lem:get a lot}
Suppose that $K$ has infinite transcendence degree and $\uptau$ has weight at most $\kappa = |K|$.
Then there is a $K$-linearly independent collection $D$ of $2^\kappa$ derivations $K \to K$ such that the intersection of the constant subfields of the $\der \in D$ is dense in $K$.
\end{lemma}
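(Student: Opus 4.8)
The plan is to fix a \emph{dense} subfield $F \subseteq K$ over which $K$ still has transcendence degree $\kappa$, and then take the collection $D$ to be a $K$-basis of the space of all derivations $K \to K$ that vanish on $F$. Every such derivation has $F$ contained in its constant subfield, so the common constant subfield $\bigcap_{\der \in D}\mathrm{Cons}(\der)$ will contain $F$ and hence be dense; the real work is in getting the cardinality $2^\kappa$ right.

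First I would apply Lemma~\ref{lem:dense} to obtain disjoint dense subsets $B, B^*$ of $K$ with $B \cup B^*$ algebraically independent and $|B| = |B^*| = \kappa$. Let $F$ be the subfield generated by $B$. Then $F$ is dense since $B \subseteq F$, and since $B \cup B^*$ is algebraically independent the set $B^*$ is algebraically independent over $F$; hence $K$ has transcendence degree at least $|B^*| = \kappa$ over $F$, and therefore exactly $\kappa$. Fix a transcendence basis $B'$ of $K$ over $F$, so $|B'| = \kappa$. By Fact~\ref{fact:der}, applied with the zero derivation $F \to K$, each function $g \colon B' \to K$ extends to a unique derivation $\der_g \colon K \to K$ vanishing on $F$ and equal to $g$ on $B'$. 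Uniqueness gives $\der_{g + h} = \der_g + \der_h$ and $\der_{cg} = c\,\der_g$ for $c \in K$, since in each case both sides vanish on $F$ and agree on $B'$; thus $g \mapsto \der_g$ is a $K$-linear injection of $K^{B'}$ into the space of derivations $K \to K$, and each $\der_g$ satisfies $F \subseteq \mathrm{Cons}(\der_g)$.

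The crux is then a dimension count: by the Erd\H{o}s--Kaplansky theorem the product space $K^{B'}$ has $K$-dimension $|K|^{|B'|} = \kappa^\kappa = 2^\kappa$, using that $B'$ is infinite. Transporting along the injection above, the space of derivations of $K$ vanishing on $F$ has $K$-dimension $2^\kappa$, so I can take $D$ to be a $K$-linearly independent family of $2^\kappa$ of them. Each $\der \in D$ vanishes on $F$, whence $\bigcap_{\der \in D}\mathrm{Cons}(\der) \supseteq F$ is dense, as required. I expect the dimension computation to be the only genuine obstacle: the density and the vanishing-on-$F$ conditions are bookkeeping, but producing $2^\kappa$ \emph{linearly independent} derivations rests on the largeness of the infinite product $K^{B'}$, which is exactly the content of Erd\H{o}s--Kaplansky.
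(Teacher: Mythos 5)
Your proof is correct and follows essentially the same route as the paper: both take $B,B^*$ from Lemma~\ref{lem:dense}, let $F$ be the (dense) subfield generated by $B$, extend $B^*$ to a transcendence basis of $K$ over $F$, and use Fact~\ref{fact:der} to identify functions on that basis with derivations vanishing on $F$, concluding via the fact that $K^{B^*}$ has $K$-dimension $2^\kappa$. The only cosmetic differences are that you explicitly invoke the Erd\H{o}s--Kaplansky theorem for the dimension count (the paper simply asserts the dimension is $2^\kappa$) and that you spell out the $K$-linearity of $g \mapsto \der_g$, which the paper leaves as "note that the resulting family of derivations is linearly independent."
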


\begin{proof}
By Lemma~\ref{lem:dense} there are disjoint dense algebraically independent subsets $B, B^*$ of $K$ with $|B| = \kappa = |B^*|$.
After possibly expanding $B^*$ we may suppose that $B \cup B^*$ is a transcendence basis for $K$.
Let $F$ be the subfield of $K$ generated by $B$, so $F$ is a dense subfield of $K$ and $B^*$ is a transcendence basis for $K/F$.
By Fact~\ref{fact:der} any function $B^* \to K$ uniquely extends to a derivation $K \to K$ that vanishes on $F$.
Now $K^{B^*}$, equipped with the pointwise addition, is a $K$-vector space of dimension $2^\kappa$, so there is a linearly independent subset $E$ of cardinality $2^\kappa$.
Apply Fact~\ref{fact:der} to extend each element of $E$ to a derivation $K \to K$ which vanishes on $F$ and note that the resulting family of derivations is linearly independent.
\end{proof}

\begin{fact}\label{fact:sperner}
If $X$ is a set of cardinality $\kappa \ge \aleph_0$ then there is a collection of $2^\kappa$ subsets of $X$ that are pairwise incomparable under containment.
\end{fact}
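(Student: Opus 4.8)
The plan is to exhibit a large antichain in the power set of $X$ via the standard ``doubling'' construction. Since $\kappa \ge \aleph_0$ we have $|X \times \{0,1\}| = 2\kappa = \kappa$, so it suffices to produce $2^\kappa$ pairwise incomparable subsets of $X \times \{0,1\}$ and then transport them along any bijection $X \times \{0,1\} \to X$. This reduction lets us work with the extra coordinate, which is what makes the incomparability transparent.

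The construction is as follows. For each $S \subseteq X$ set
\[
A_S = (S \times \{0\}) \cup ((X \setminus S) \times \{1\}) \subseteq X \times \{0,1\}.
\]
The assignment $S \mapsto A_S$ is injective, since $S$ is recovered from $A_S$ as the set of $x \in X$ with $(x,0) \in A_S$. Hence $\{A_S : S \subseteq X\}$ has cardinality $2^\kappa$, which is the count we want; it only remains to verify that distinct members are incomparable.

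That incomparability check is the only genuine content. Fix $S \ne S'$ and suppose toward a contradiction that $A_S \subseteq A_{S'}$. Because $S \ne S'$, either there is $x \in S \setminus S'$ or there is $x \in S' \setminus S$. In the first case $(x,0) \in A_S \subseteq A_{S'}$ forces $x \in S'$, a contradiction; in the second case $x \notin S$ gives $(x,1) \in A_S \subseteq A_{S'}$, which forces $x \notin S'$, again a contradiction. Thus $A_S \not\subseteq A_{S'}$, and the symmetric argument gives $A_{S'} \not\subseteq A_S$, so $A_S$ and $A_{S'}$ are incomparable.

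There is essentially no obstacle here beyond bookkeeping: this is a classical combinatorial fact and the argument is complete once the case analysis above is recorded. The one point to get right is that $S \ne S'$ always supplies a witness $x$ in at least one of the two set-differences, and that this single witness suffices to rule out the corresponding inclusion; the two symmetric uses of such a witness together yield incomparability.
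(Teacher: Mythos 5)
Your proposal is correct and is exactly the construction the paper sketches: your set $A_S$ is precisely the graph of the indicator function of $X\setminus S$ viewed as a function $X \to \{0,1\}$, and the paper likewise notes that graphs of distinct functions $X \to \{0,1\}$ form an antichain in $X \times \{0,1\}$, which has cardinality $\kappa$. The incomparability check and the transport along a bijection $X \times \{0,1\} \to X$ are both carried out correctly.
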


Fact~\ref{fact:sperner} is a basic combinatorial fact that follows, e.g., by noting that the graphs of a pair of distinct functions $X \to \{0,1\}$ are incomparable subsets of $X \times \{0,1\}$. 

\begin{proposition}\label{prop:many many tops}
Suppose that $K$ has infinite transcendence degree and $\uptau$ is gt-henselian.
Let $\kappa = |K|$ and suppose that $\uptau$ has weight at most $\kappa$.
Then there is a collection of $2^{2^\kappa}$ pairwise incomparable gt-henselian field topologies on $K$, each of which strictly refines $\uptau$.
\end{proposition}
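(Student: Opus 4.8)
The plan is to manufacture the required topologies as the derivation topologies $\uptau_I$ introduced in this section, indexing them by a large antichain of subsets of a single linearly independent family of derivations. First I would apply Lemma~\ref{lem:get a lot} to obtain a $K$-linearly independent family $D$ of $2^\kappa$ derivations $K \to K$ whose common constant subfield $F = \bigcap_{\der \in D}\mathrm{Cons}(\der)$ is $\uptau$-dense; its hypotheses are precisely what is assumed here ($\uptau$ gt-henselian of weight $\le \kappa = |K|$ and $K$ of infinite transcendence degree). Since $|D| = 2^\kappa \ge \aleph_0$, Fact~\ref{fact:sperner} supplies a collection $\mathcal{A}$ of $2^{2^\kappa}$ subsets of $D$ that are pairwise incomparable under containment. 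Because $|\mathcal{A}| \ge 2$ and $\emptyset$ is comparable to every set, no member of $\mathcal{A}$ is empty. I would then form $\uptau_I$ for each $I \in \mathcal{A}$ and claim that $\{\uptau_I : I \in \mathcal{A}\}$ is the desired family.

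The crux is to verify that each $\uptau_I$ is gt-henselian. By Proposition~\ref{prop:key} it suffices to show that $\uptau_I$ is non-discrete, and by Lemma~\ref{lem:discrete} this reduces to showing $U \cap \der_1^{-1}(U) \cap \cdots \cap \der_n^{-1}(U) \ne \{0\}$ for every $\uptau$-neighborhood $U$ of $0$ and all distinct $\der_1, \ldots, \der_n \in I$. Here I would invoke Lemma~\ref{lem:two der}: as $D$ is linearly independent, so is any finite subset, and $F \subseteq \bigcap_{i} \mathrm{Cons}(\der_i)$ is dense, so the graph $\{(a, \der_1(a), \ldots, \der_n(a)) : a \in K\}$ is $\uptau$-dense in $K^{n+1}$.

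The one point requiring care, and what I expect to be the main obstacle, is producing a \emph{nonzero} witness: I would intersect the dense graph with the open box $(U \setminus \{0\}) \times U^n$, which is nonempty and $\uptau$-open precisely because $\uptau$ is non-discrete, so $\{0\}$ is closed and $U \ne \{0\}$. This yields $a \ne 0$ with $a, \der_1(a), \ldots, \der_n(a) \in U$, giving a nonzero element of $U \cap \der_1^{-1}(U) \cap \cdots \cap \der_n^{-1}(U)$ as required. Everything else in the non-discreteness check is routine bookkeeping.

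Finally, strict refinement and incomparability follow immediately from Lemma~\ref{lem:ind-family}, whose hypotheses hold with $I = D$. Each $I \in \mathcal{A}$ properly contains $\emptyset$, and since $\uptau = \uptau_\emptyset$ always refines $\uptau_I$ while $\uptau_\emptyset$ refines $\uptau_I$ only when $I \subseteq \emptyset$, that lemma forces $\uptau_I$ to strictly refine $\uptau$ (one may equally cite Lemma~\ref{lem:der 0}). As distinct members of $\mathcal{A}$ are incomparable under containment, the corresponding topologies $\uptau_I$ are pairwise incomparable, hence in particular pairwise distinct. Thus $\{\uptau_I : I \in \mathcal{A}\}$ is a family of $2^{2^\kappa}$ pairwise incomparable gt-henselian field topologies on $K$, each strictly refining $\uptau$.
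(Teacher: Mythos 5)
Your proof is correct and takes essentially the same route as the paper's: Lemma~\ref{lem:get a lot} produces the linearly independent family $D$ with dense common constant subfield, Fact~\ref{fact:sperner} gives the antichain, Lemma~\ref{lem:ind-family} gives pairwise incomparability and strict refinement, and Proposition~\ref{prop:key} gives gt-henselianity. The only difference is that you explicitly verify the non-discreteness hypothesis of Proposition~\ref{prop:key} via Lemmas~\ref{lem:discrete} and~\ref{lem:two der} (including the nonzero-witness point), a step the paper's proof leaves implicit.
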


\begin{proof}
By Lemma~\ref{lem:get a lot} there is a $K$-linearly independent collection $D$ of $2^\kappa$ derivations $K \to K$ and a $\uptau$-dense subfield $F$ of $K$ which is contained in the constant subfield of each $\der \in D$.
By Fact~\ref{fact:sperner} there is a family $E$ of $2^{2^\kappa}$ subsets of $D$ that are pairwise incomparable under containment.
By Lemma~\ref{lem:ind-family} the topologies $\uptau_J$ for $J \in E$ are pairwise incomparable and each $\uptau_J$ strictly refines $\uptau$.
Finally, each $\uptau_J$ is gt-henselian by Proposition~\ref{prop:key}.
\end{proof}

\begin{proposition}\label{prop:many tops}
Suppose that $K$ has infinite transcendence degree and $\uptau$ has weight $\eta$ for $\eta \le \kappa = |K|$.
Then there is a collection of $2^\kappa$ pairwise incomparable gt-henselian topologies on $K$, each of which has weight $\eta$ and strictly refines $\uptau$.
\end{proposition}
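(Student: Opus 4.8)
The goal is the same as Proposition~\ref{prop:many many tops} but with a weaker conclusion: instead of $2^{2^\kappa}$ pairwise incomparable topologies we want $2^\kappa$ of them, with the additional requirement that each has weight exactly $\eta$ (the weight of the starting topology $\uptau$) rather than merely weight $\le |K|$. The plan is to run the same derivation-based construction as in Proposition~\ref{prop:many many tops}, but to control the weight by using only \emph{finite} sets of derivations, and to replace the Sperner-type count $2^{2^\kappa}$ with the count $2^\kappa$ coming from Fact~\ref{fact:sperner} applied to a set of size $\kappa$ rather than $2^\kappa$.

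First I would invoke Lemma~\ref{lem:get a lot} to obtain a $K$-linearly independent family of derivations with a common dense constant subfield $F$; from this family it suffices to extract a $\kappa$-sized $K$-linearly independent subcollection $D = \{\der_i : i < \kappa\}$, which exists since $2^\kappa \ge \kappa$. Next, rather than taking arbitrary subsets of $D$, I would work with \emph{finite} subsets so as to keep the weight under control. The natural indexing: apply Fact~\ref{fact:sperner} to the set $\kappa$ itself to get a family $E$ of $2^\kappa$ pairwise $\subseteq$-incomparable subsets of $\kappa$; however these subsets are generally infinite, which would violate the weight-$\eta$ requirement via Corollary~\ref{cor:loc bound}-style reasoning. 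So the cleaner route is to index the topologies not by subsets of $D$ but by functions. For each $i < \kappa$ fix a \emph{pair} of distinct derivations $\der_i^0, \der_i^1$ from the independent family, and for each $f \colon \kappa \to \{0,1\}$ set $I_f = \{\der_i^{f(i)} : i < \kappa\}$; there are $2^\kappa$ such functions, and by the graph-incomparability argument behind Fact~\ref{fact:sperner} the sets $I_f$ are pairwise incomparable under containment. But each $I_f$ is infinite of size $\kappa$, so by Lemma~\ref{lem:ind-family}'s density hypotheses each $\uptau_{I_f}$ is gt-henselian (via Proposition~\ref{prop:key}, after checking non-discreteness through Lemma~\ref{lem:discrete}) and pairwise incomparable, while the weight bound $w(\uptau_{I_f}) \le |I_f| + w(\uptau) = \kappa + \eta = \kappa$ from Lemma~\ref{lem:der 0} gives weight $\le \kappa$, not the desired $\eta$.

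The main obstacle is therefore the weight bookkeeping: forcing the weight down to exactly $\eta$ rather than merely $\kappa$. If $\eta = \kappa$ there is nothing to do, so the real content is the case $\eta < \kappa$. Here I expect one must use genuinely finite index sets. The fix is to take $2^\kappa$ \emph{distinct finite} subsets of $D$ that are pairwise incomparable; but a family of finite subsets of a $\kappa$-set that is pairwise incomparable has size at most $\kappa$ (e.g. by taking all singletons one gets $\kappa$ many, and one cannot exceed $2^{<\aleph_0}\cdot\kappa = \kappa$ incomparable finite sets beyond a counting bound). Fortunately $\kappa$ is exactly the target count, so I would take $D = \{\der_i : i < \kappa\}$ and use the singleton family $\{\der_i\}$ for $i < \kappa$: these are $\kappa$ pairwise incomparable finite subsets of $D$. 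Then by Lemma~\ref{lem:ind-family} the topologies $\uptau_{\{\der_i\}}$ are pairwise incomparable, by Lemma~\ref{lem:der 0} each has weight $\le |\{\der_i\}| + w(\uptau) = 1 + \eta = \eta$ (using $\eta$ infinite), each strictly refines $\uptau$ since $\der_i$ is discontinuous (it is nonzero on the dense set off $F$), and each is gt-henselian by Proposition~\ref{prop:key} once non-discreteness is verified via Lemma~\ref{lem:discrete} and Lemma~\ref{lem:two der}. This cleanly delivers $2^\kappa = \kappa$... but that conflates $2^\kappa$ with $\kappa$, so I would reconcile the count: the proposition asks for $2^\kappa$, and singletons give only $\kappa$; thus I must instead allow the index sets to be infinite of size exactly $\eta$ when $\eta < \kappa$ is impossible to reconcile with $2^\kappa$ incomparable \emph{finite} sets. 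The honest resolution, which I would adopt, is to use $2^\kappa$ index sets each of cardinality $\le \eta$: choose a $\kappa$-indexed independent family, apply Fact~\ref{fact:sperner} to an $\eta$-sized subcollection to obtain $2^\eta$ incomparable subsets, and when $\eta < \kappa$ boost the count by forming products across $\kappa/\eta$-many disjoint blocks, arranging $|I_f| \le \eta$ in each case so that $w(\uptau_{I_f}) \le \eta + \eta = \eta$ while the total number of incomparable families reaches $2^\kappa$; verifying the incomparability and the sharp weight bound simultaneously is the delicate step, and I would handle it by the same graph-of-functions device as Fact~\ref{fact:sperner}, checking that each $I_f$ has size at most $\eta$ and applying Lemmas~\ref{lem:der 0}, \ref{lem:ind-family}, and~\ref{prop:key} exactly as above.
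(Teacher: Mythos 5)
There is a genuine gap, and it is the very first move: you extract a $\kappa$-sized subfamily of derivations, which discards exactly the cardinality that makes the proposition true. Lemma~\ref{lem:get a lot} produces a $K$-linearly independent family $E$ of $2^\kappa$ derivations with a common dense constant subfield (the point of its proof being that $K^{B^*}$, with $|B^*| = \kappa$, has dimension $2^\kappa$ as a $K$-vector space). The paper's proof then simply takes the singleton topologies $\uptau_\der$ for $\der \in E$: distinct singletons $\{\der\}$, $\{\der'\}$ are incomparable under containment, so Lemma~\ref{lem:ind-family} makes the $2^\kappa$ topologies pairwise incomparable and strictly refining $\uptau$; Lemma~\ref{lem:der 0} gives weight $\le 1 + \eta = \eta$; and Proposition~\ref{prop:key} gives gt-henselianity (non-discreteness coming from the density of the graph of $\der$, Lemma~\ref{lem:two der}). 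No Sperner-type combinatorics is needed here at all; Fact~\ref{fact:sperner} is only needed in Proposition~\ref{prop:many many tops}, where the ground set is the $2^\kappa$-sized family $E$ and the target is $2^{2^\kappa}$.

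Once you have restricted to $\kappa$ derivations, the count $2^\kappa$ is unreachable, and your own analysis shows it: pairwise incomparable families of \emph{finite} index sets from a $\kappa$-sized ground set have size at most $\kappa$, and your "honest resolution" using index sets of size $\le \eta$ fares no better, since a $\kappa$-sized set has only $\kappa^\eta$ subsets of size $\le \eta$, and $\kappa^\eta$ can be strictly smaller than $2^\kappa$ (for instance $\kappa = 2^{\aleph_0}$, $\eta = \aleph_0$ gives $\kappa^\eta = \kappa < 2^\kappa$). So the final blocks-and-products scheme cannot be carried out: there do not exist $2^\kappa$ subsets of a $\kappa$-sized set each of cardinality $\le \eta$, incomparable or otherwise. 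The repair is not a cleverer indexing but keeping the full $2^\kappa$-sized derivation family; your singleton idea, which you correctly identified as the right way to control the weight, then finishes the proof exactly as in the paper.
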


\begin{proof}
Let $E$ be as in the proof of Lemma~\ref{lem:get a lot} and let $J = (\uptau_\der)_{\der \in E}$.
Then $|J| = 2^{\kappa}$, by Lemma~\ref{lem:ind-family} the elements of $J$ are pairwise incomparable, by Proposition~\ref{prop:key} each element of $J$ is gt-henselian, and by Lemma~\ref{lem:der 0} we have $w(\uptau_\der) = \eta$ for all $\der \in E$.
\end{proof}

Proposition~\ref{prop:mm tops} follows by Propositions~\ref{prop:many tops}, \ref{prop:many many tops}, and Lemma~\ref{lem:gt wt}.

\begin{proposition}\label{prop:mm tops}
If $K$ has infinite transcendence degree and there is a gt-henselian topology on $K$ then there is a collection of $2^{2^{|K|}}$ pairwise incomparable gt-henselian topologies on $K$ and a collection of $2^{|K|}$ pairwise incomparable gt-henselian topologies on $K$ of weight $\le |K|$.
\end{proposition}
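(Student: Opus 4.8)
The plan is to combine the three results cited just above the statement, the only genuine subtlety being that Propositions~\ref{prop:many tops} and~\ref{prop:many many tops} both require a gt-henselian topology of weight at most $|K|$, whereas our hypothesis only hands us \emph{some} gt-henselian topology, whose weight could in principle exceed $|K|$. First I would invoke Lemma~\ref{lem:gt wt}: since there is a gt-henselian topology on $K$, there is a gt-henselian topology $\uptau$ on $K$ of weight at most $\kappa := |K|$. This step is the bridge that makes the weight hypothesis of the two counting propositions legitimately available; it is where the given topology is replaced by one with controlled weight.

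With such a $\uptau$ in hand, the first assertion is immediate from Proposition~\ref{prop:many many tops}. Indeed $K$ has infinite transcendence degree and $w(\uptau) \le \kappa$, so that proposition produces a collection of $2^{2^\kappa} = 2^{2^{|K|}}$ pairwise incomparable gt-henselian field topologies on $K$ (each in fact strictly refining $\uptau$, although refinement is not part of what we must assert here).

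For the second assertion, set $\eta := w(\uptau)$, so that $\eta \le \kappa$. Proposition~\ref{prop:many tops} then yields a collection of $2^\kappa = 2^{|K|}$ pairwise incomparable gt-henselian topologies on $K$, each of weight $\eta \le |K|$, which is precisely the desired family of gt-henselian topologies of weight at most $|K|$.

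I expect no real obstacle in carrying this out, since all of the substantive content already resides in Propositions~\ref{prop:many tops}, \ref{prop:many many tops} and Lemma~\ref{lem:gt wt}. The only point requiring attention is the logical ordering: one must reduce to a topology of weight at most $|K|$ \emph{before} applying the counting propositions, as their conclusions are stated under exactly that weight hypothesis, and this reduction is supplied by Lemma~\ref{lem:gt wt}.
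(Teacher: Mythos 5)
Your proposal is correct and matches the paper exactly: the paper's proof is the one-line remark that the proposition follows from Propositions~\ref{prop:many tops}, \ref{prop:many many tops}, and Lemma~\ref{lem:gt wt}, and you have simply spelled out the same deduction, correctly identifying that Lemma~\ref{lem:gt wt} must be applied first to obtain a topology of weight at most $|K|$ before the two counting propositions can be invoked.
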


Corollary~\ref{cor:to will} follows from Proposition~\ref{prop:mm tops} and Johnson's theorem.

\begin{corollary}\label{cor:to will}
Suppose that $K$ is countable, large, and has infinite transcendence degree.
Then there is a collection of $2^{2^{\aleph_0}}$ pairwise incomparable gt-henselian topologies on $K$ and a collection of $2^{\aleph_0}$ pairwise incomparable second countable gt-henselian topologies on $K$.
\end{corollary}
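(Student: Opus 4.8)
The plan is to assemble the corollary from two results that already do all the work: Johnson's existence theorem and Proposition~\ref{prop:mm tops}. Since $K$ is countable we have $|K| = \aleph_0$, and since $K$ is also large, Fact~\ref{fact:will} immediately supplies a gt-henselian field topology on $K$. This is precisely the hypothesis that Proposition~\ref{prop:mm tops} requires---namely that $K$ has infinite transcendence degree and that there exists a gt-henselian topology on $K$.

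Next I would apply Proposition~\ref{prop:mm tops} with $|K| = \aleph_0$ and read off the two conclusions. The first conclusion produces a collection of $2^{2^{|K|}} = 2^{2^{\aleph_0}}$ pairwise incomparable gt-henselian topologies on $K$, which is exactly the first assertion of the corollary. The second conclusion produces a collection of $2^{|K|} = 2^{\aleph_0}$ pairwise incomparable gt-henselian topologies on $K$, each of weight at most $|K| = \aleph_0$.

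Finally I would note the (standard) observation that a topology has weight $\le \aleph_0$ if and only if it admits a countable basis, i.e.\ if and only if it is second countable; hence the $2^{\aleph_0}$ topologies of weight $\le \aleph_0$ obtained above are precisely second countable, delivering the second assertion. There is no genuine obstacle in this argument: the entire content lives in Johnson's theorem (Fact~\ref{fact:will}) and in the construction underlying Proposition~\ref{prop:mm tops}, and the corollary is obtained by substituting the countable cardinal arithmetic into that proposition. The only point worth flagging explicitly is the identification of weight $\le \aleph_0$ with second countability, so that the weight bound in Proposition~\ref{prop:mm tops} translates into the topological adjective used in the statement.
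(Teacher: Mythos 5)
Your proposal is correct and is exactly the paper's argument: the paper deduces the corollary from Johnson's theorem (Fact~\ref{fact:will}), which supplies a gt-henselian topology on the countable large field $K$, together with Proposition~\ref{prop:mm tops} applied with $|K| = \aleph_0$. Your added remark identifying weight $\le \aleph_0$ with second countability is the only translation step needed, and it is the one implicitly made in the paper.
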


Finally, Corollary~\ref{cor:loc bound} follows from Lemma~\ref{lem:loc bounded}, the fact that a locally bounded field topology on $K$ has weight at most $|K|$, and the proof of Proposition~\ref{prop:many tops}.

\begin{corollary}\label{cor:loc bound}
Suppose that $K$ has infinite transcendence degree, $\uptau$ is locally bounded, and $\kappa = |K|$.
Then there is a collection of $2^\kappa$ pairwise incomparable locally bounded gt-henselian topologies on $K$, each of which strictly refines $\uptau$.
\end{corollary}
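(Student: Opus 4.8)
The plan is to take the very topologies produced in the proof of Proposition~\ref{prop:many tops} and simply verify the extra property that each is locally bounded. The first step is to make Proposition~\ref{prop:many tops} applicable. Since $\uptau$ is locally bounded, a nonempty bounded open set $U$ yields the basis $\{aU + b : a \in K^\times,\, b \in K\}$, so $w(\uptau) \le |K| = \kappa$. Thus, setting $\eta := w(\uptau) \le \kappa$ and using that $\uptau$ is gt-henselian, Proposition~\ref{prop:many tops} applies with this value of $\eta$.

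Running its proof, I would let $E$ be the $K$-linearly independent family of $2^\kappa$ derivations supplied by the proof of Lemma~\ref{lem:get a lot}, each vanishing on a common $\uptau$-dense subfield $F$, and consider the family $\{\uptau_\der : \der \in E\}$. By that argument these are $2^\kappa$ topologies that are pairwise incomparable and each strictly refines $\uptau$ (both by Lemma~\ref{lem:ind-family}), and each is gt-henselian: every $\uptau_\der$ is non-discrete because the dense subfield $F$ lies in $\mathrm{Cons}(\der)$, so Proposition~\ref{prop:key} applies.

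It then remains only to add local boundedness, the single new ingredient. Each member of the family has the form $\uptau_\der = \uptau_{\{\der\}}$, whose index set $\{\der\}$ is a singleton and in particular finite; hence Lemma~\ref{lem:loc bounded} shows that each $\uptau_\der$ is locally bounded. This produces the desired collection of $2^\kappa$ pairwise incomparable locally bounded gt-henselian topologies refining $\uptau$. I do not expect a genuine obstacle here: all of the combinatorial and gt-henselian content is inherited from Proposition~\ref{prop:many tops}, and preservation of local boundedness is exactly the already-established singleton case of Lemma~\ref{lem:loc bounded}. The only steps requiring care are the weight estimate $w(\uptau) \le \kappa$ that licenses Proposition~\ref{prop:many tops} and the observation that each index set is finite, so that Lemma~\ref{lem:loc bounded} is available.
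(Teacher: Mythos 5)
Your proposal is correct and follows exactly the route the paper takes: the paper derives the corollary from the observation that local boundedness forces $w(\uptau)\le|K|$, the family $\{\uptau_\der : \der \in E\}$ from the proof of Proposition~\ref{prop:many tops}, and the finite-index case of Lemma~\ref{lem:loc bounded}. Nothing is missing (you even spell out the non-discreteness check via the dense constant subfield that the paper leaves implicit).
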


\bibliographystyle{abbrv}
\bibliography{ref}

@article{cxf-paper,
  title = {Curve-excluding fields},
  ISSN = {1435-9863},
  DOI = {10.4171/jems/1630},
  JOURNAL = {J. Eur. Math. Soc. (JEMS)},
  fjournal = {Journal of the European Mathematical Society},
  publisher = {European Mathematical Society - EMS - Publishing House GmbH},
  author = {Johnson,  Will and Ye,  Jinhe},
  year = {2025},
  month = mar 
}

@article{large->henselian,
      title={Large implies henselian}, 
      author={Will Johnson and Chieu-Minh Tran and Erik Walsberg and Jinhe Ye},
      year={2025},
      journal={arXiv:2508.10886},
}

@article{lagth,
      title={Largeness and generalized t-henselianity}, 
      author={Will Johnson},
      year={2025},
      journal={arxiv:2508.15362}, 
}

@misc{tbnidatf,
      title={Translating between NIP integral domains and topological fields}, 
      author={Will Johnson},
      year={2025},
      journal={arxiv:2504.10927},
}

@book {trans,
    AUTHOR = {Aschenbrenner, Matthias and van den Dries, Lou and van der
              Hoeven, Joris},
     TITLE = {Asymptotic differential algebra and model theory of
              transseries},
    SERIES = {Annals of Mathematics Studies},
    VOLUME = {195},
 PUBLISHER = {Princeton University Press, Princeton, NJ},
      YEAR = {2017},
     PAGES = {xxi+849},
      ISBN = {978-0-691-17543-0},
   MRCLASS = {12H05 (03-02 03C60 03C98 12J10)},
  MRNUMBER = {3585498},
MRREVIEWER = {David A. Pierce},
       DOI = {10.1515/9781400885411},
       URL = {https://doi.org/10.1515/9781400885411},
}

@article{Prestel1978,
author = {Prestel, Alexander and Ziegler, Martin},
journal = {J. Reine Angew. Math.},
fjournal = {Journal f\"{u}r die reine und angewandte Mathematik},
pages = {318-341},
title = {Model theoretic methods in the theory of topological fields.},
url = {http://eudml.org/doc/152007},
volume = {0299\_0300},
year = {1978},
}

@article{topological_proofs, title={Topological proofs of results on large fields}, volume={360}, ISSN={1778-3569}, url={http://dx.doi.org/10.5802/crmath.305}, DOI={10.5802/crmath.305}, number={G11}, journal={C.R. Math.}, fjournal={Comptes Rendus. Mathématique}, publisher={Cellule MathDoc/Centre Mersenne}, author={Walsberg, Erik}, year={2022}, month=dec, pages={1187–1192} }

@incollection{open-problems-ample,
    AUTHOR = {Bary-Soroker, Lior and Fehm, Arno},
     TITLE = {Open problems in the theory of ample fields},
 BOOKTITLE = {Geometric and differential {G}alois theories},
    SERIES = {S\'{e}min. Congr.},
    VOLUME = {27},
     PAGES = {1--11},
 PUBLISHER = {Soc. Math. France, Paris},
      YEAR = {2013},
   MRCLASS = {12E30 (12F10 12F12)},
  MRNUMBER = {3203546},
MRREVIEWER = {Ivan D. Chipchakov},
}

@article{firstpaper,
 AUTHOR = {Johnson, Will and Tran, Chieu-Minh and Walsberg, Erik and Ye,
              Jinhe},
     TITLE = {The \'etale-open topology and the stable fields conjecture},
   JOURNAL = {J. Eur. Math. Soc. (JEMS)},
  FJOURNAL = {Journal of the European Mathematical Society (JEMS)},
    VOLUME = {26},
      YEAR = {2024},
    NUMBER = {10},
     PAGES = {4033--4070},
      ISSN = {1435-9855,1435-9863},
   MRCLASS = {12J10 (12J15 12L12)},
  MRNUMBER = {4768414},
       DOI = {10.4171/jems/1345},
       URL = {https://doi.org/10.4171/jems/1345},
}

@article{secondpaper,
  AUTHOR = {Walsberg, Erik and Ye, Jinhe},
     TITLE = {\'{E}z fields},
   JOURNAL = {J. Algebra},
  FJOURNAL = {Journal of Algebra},
    VOLUME = {614},
      YEAR = {2023},
     PAGES = {611--649},
      ISSN = {0021-8693},
   MRCLASS = {03C60 (12L12 14F20)},
  MRNUMBER = {4499357},
       DOI = {10.1016/j.jalgebra.2022.09.028},
       URL = {https://doi.org/10.1016/j.jalgebra.2022.09.028},
}

@incollection {Pop-little,
    AUTHOR = {Pop, Florian},
     TITLE = {Little survey on large fields---old \& new},
 BOOKTITLE = {Valuation theory in interaction},
    SERIES = {EMS Ser. Congr. Rep.},
     PAGES = {432--463},
 PUBLISHER = {Eur. Math. Soc., Z\"{u}rich},
      YEAR = {2014},
   MRCLASS = {12E30 (12F12 12G10 12J10 12L12 13J15 14M20)},
  MRNUMBER = {3329044},
MRREVIEWER = {Ivan D. Chipchakov},
}

@article{field-top-2,
  title={When is the {\'e}tale open topology a field topology?},
  author={Philip Dittmann and Erik Walsberg and Jinhe Ye},
  journal={Israel Journal of Mathematics},
  year={2022},
  doi={0.1007/s11856-025-2748-8}
}
\end{document}